\newtheorem{theorem}{Theorem}
\newtheorem{lemma}[theorem]{Lemma}
\newtheorem{corollary}[theorem]{Corollary}
\newtheorem{conjecture}{Conjecture}
\newtheorem{question}{Question}
\newcommand{\set}[1]{\ensuremath{\left\{#1 \right\}}}
\newcommand{\ci}{\ensuremath{\chi_{\mathrm{irr}}'}}
\g@addto@macro{\UrlBreaks}{\UrlOrds}
\newenvironment{proofclaim}[1][]%
    {\noindent \emph{Proof.} {}{#1}{}}{$~$\hfill $~\blacklozenge$ \vspace{0.2cm}}
\begin{document}

\title{New bounds for locally irregular chromatic index of bipartite and subcubic graphs}

\author
{
	Borut Lu\v{z}ar\thanks{Faculty of Information Studies, Novo mesto, Ljubljanska cesta 31A, 8000 Novo mesto, Slovenia. 
		E-Mail: \texttt{borut.luzar@gmail.com}}, \
	Jakub Przyby{\l}o\thanks{AGH University of Science and Technology, al. A. Mickiewicza 30, 30--059 Krakow, Poland. 
		E-Mail: \texttt{jakubprz@agh.edu.pl}}, \
	Roman Sot\'{a}k\thanks{Faculty of Science, Pavol Jozef \v{S}af\'arik University, Jesenn\'{a} 566/5, 040 01 Ko\v{s}ice, Slovakia.
		E-Mail: \texttt{roman.sotak@upjs.sk}}
}

\maketitle

{
\begin{abstract}
	A graph is \textit{locally irregular} if the neighbors of every vertex $v$ have degrees distinct from the degree of $v$.
	A \textit{locally irregular edge-coloring} of a graph $G$ is an (improper) edge-coloring such that
	the graph induced on the edges of any color class is locally irregular. 
	It is conjectured that $3$ colors suffice for a locally irregular edge-coloring. 
	Recently, Bensmail et al. 
	(Bensmail, Merker, Thomassen: Decomposing graphs into a constant number of locally irregular subgraphs, {\em European J. Combin.}, 60:124--134, 2017)
	settled the first constant upper bound for the problem to $328$ colors.
	In this paper, using a combination of existing results, 
	we present an improvement of the bounds for bipartite graphs and general graphs, setting the best upper bounds
	to 7 and 220, respectively. 
	In addition, we also prove that $4$ colors suffice for locally irregular edge-coloring of any subcubic graph. 
\end{abstract}
}

\bigskip
{\noindent\small \textbf{Keywords:} locally irregular graph, locally irregular edge-coloring, bipartite graph, subcubic graph.}

\section{Introduction}

In this paper, we consider only finite simple graphs.
A graph is \textit{locally irregular} if the neighbors of every vertex $v$ have degrees distinct from the degree of $v$.
A \textit{locally irregular edge-coloring} of a graph $G$ is an (improper) edge-coloring 
such that the graph induced on the edges of any color class is locally irregular. 
The coloring has been recently introduced by Baudon et al.~\cite{BauBenPrzWoz15} 
who were motivated by the well known (1-2-3)-conjecture proposed in~\cite{KarLucTho04}.

Clearly, not every graph admits a locally irregular edge-coloring. Beside trivial examples (paths and cycles of odd lengths),
in~\cite{BauBenPrzWoz15}, the authors completely characterized the graphs not admitting any locally irregular edge-coloring. 
The graphs that do, are called \textit{decomposable}. The smallest number of colors such that a decomposable graph $G$ admits 
a locally irregular edge-coloring is called the \textit{locally irregular chromatic index} and denoted by $\ci(G)$.
The initiators proposed the following surprising conjecture.
\begin{conjecture}[Baudon et al., 2015]
	\label{con:main}
	Let $G$ be a decomposable graph. Then
	$$
		\ci(G) \le 3\,.
	$$
\end{conjecture}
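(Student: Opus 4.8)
The plan is to establish Conjecture~\ref{con:main} by induction on $|E(G)|$, reducing a general decomposable graph to a short list of base configurations. First I would dispose of the easy reductions: since the color classes of distinct components can be chosen independently, it suffices to treat connected $G$; and since a bridge incident to a pendant vertex can be absorbed into a local structure, one should reduce toward the $2$-edge-connected case while carefully tracking the non-decomposable exceptions (odd paths, odd cycles, and the special family $\mathcal{T}$) so that no reduction step accidentally creates such a component.

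The core of the argument I would phrase as a degree-assignment / constraint-satisfaction problem. A $3$-edge-coloring is locally irregular precisely when, writing $d_c(v)$ for the number of edges of color $c$ incident to $v$, every edge $uv$ of color $c$ satisfies $d_c(u)\neq d_c(v)$. Thus each vertex $v$ must select a triple with $d_1(v)+d_2(v)+d_3(v)=\deg(v)$, and the global choice must be realizable by an actual coloring while respecting all per-edge disequalities. For graphs that are neither regular nor near-regular I would exploit the abundance of distinct degrees: orient edges from lower toward higher degree, color by a greedy or alternating scheme, and repair the few monochromatic violations locally. Where greed fails I would supply an algebraic certificate via the Combinatorial Nullstellensatz applied to a polynomial vanishing exactly on the forbidden equalities $d_c(u)=d_c(v)$, or an entropy-compression/Lovász-Local-Lemma argument in the spirit of the known large-minimum-degree results, which already give $\ci(G)\le 3$ once $\delta(G)$ is sufficiently large.

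The decisive difficulty is the regular and near-regular case, where every vertex shares its degree with all of its neighbors and the symmetry has to be broken deliberately. Here I would seek a spanning even subgraph (a union of cycles covering enough edges) or a $2$-factor, color it with one color after redistributing its contributions so that consecutive vertices along each cycle receive unequal color-degrees, and absorb the remaining near-$(d-2)$-regular part into the other two colors. The key sublemma to prove is that a $d$-regular graph admits a decomposition into a locally irregular subgraph plus a decomposable graph of strictly smaller maximum degree, so that the induction can close; I would then verify the residual small cases and the finite exceptional family $\mathcal{T}$ by direct inspection.

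I expect the main obstacle to be exactly this inductive step. Removing a locally irregular subgraph can turn a harmless graph into one of the forbidden non-decomposable shapes, and in the regular regime there is no cheap mechanism for manufacturing the degree differences that local irregularity demands. Bridging the gap between the large-$\delta$ regime, where probabilistic tools succeed, and the sparse regime, where structural peeling works, is the heart of why Conjecture~\ref{con:main} has resisted a complete proof, and any full argument will have to control both ends of this spectrum simultaneously.
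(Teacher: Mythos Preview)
This statement is a \emph{conjecture} in the paper, not a theorem; the paper contains no proof of it, and indeed the authors explicitly work toward partial results (the bounds $7$, $220$, and $4$ for bipartite, general, and subcubic graphs respectively) precisely because the conjecture remains open. So there is nothing to compare your proposal against.

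Your proposal itself is not a proof but a program, and you are candid about this: you write that the inductive step is ``the heart of why Conjecture~\ref{con:main} has resisted a complete proof.'' That is an accurate diagnosis, but it means the document is a discussion of where the difficulty lies rather than a resolution of it. Concretely, the two mechanisms you invoke for the hard cases do not currently close: the Local Lemma / entropy-compression approach (Przyby{\l}o) needs $\delta(G)\ge 10^{10}$ and gives no control below that threshold, while the ``peel off a locally irregular subgraph and recurse'' idea founders exactly where you say it does---removing such a subgraph can create non-decomposable components, and in the regular regime there is no known way to manufacture the required degree asymmetries while keeping the remainder decomposable and of strictly smaller complexity. The Combinatorial Nullstellensatz suggestion is also only a gesture: the polynomial encoding the constraints $d_c(u)\neq d_c(v)$ has degree growing with $|E(G)|$, and no monomial-selection argument is supplied. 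In short, the proposal correctly identifies the obstacles but does not overcome any of them; it should be read as a research outline, not as a proof.
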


Recently, Bensmail et al.~\cite{BenMerTho16} established the first constant upper bound for general graphs.
\begin{theorem}[Bensmail et al., 2016]
	Let $G$ be a decomposable graph. Then
	$$
		\ci(G) \le 328\,.
	$$	
\end{theorem}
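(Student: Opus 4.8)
The plan is to partition the edge set of $G$ into a \emph{bounded} number of subgraphs, each of a type that admits a locally irregular edge-colouring with few colours, and then to colour these subgraphs with pairwise disjoint palettes, so that $\ci(G)$ is at most the sum of the individual bounds. Two families of parts suffice. The first is bipartite graphs: for decomposable bipartite graphs the bound $\ci \le 10$ is known, and for forests even $\ci \le 3$. The second is graphs in which every vertex has even degree; the reason for isolating these is that their components are Eulerian, and this is precisely the structural feature that lets one decompose them into $O(1)$ locally irregular pieces rather than $\Theta(\log \Delta)$ of them, as a naive ``split off a large cut and recurse'' argument would force.

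The first step is a reduction to the even case. Write $E(G) = E(F) \cup E(H)$, where $F$ is a minimal $T$-join for $T$ the set of odd-degree vertices of $G$; such an $F$ is acyclic, hence a forest and in particular bipartite, while $H := G - F$ has all degrees even. After deleting isolated vertices, and shuffling a few edges between $F$ and $H$ near any component of $F$ that happens to be a path of odd length (so that no colour class ends up being one of the graphs excluded by the characterisation of decomposable graphs recalled above), $F$ becomes decomposable with $\ci(F) \le 3$. This care around exceptional components must be exercised at every stage below, and at worst one collects the few genuinely problematic edges into one more part that is coloured with $O(1)$ additional colours.

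The heart of the argument is the even graph $H$. Here the plan is to use $T$-join and parity technology together with Eulerian orientations of the components of $H$ to peel off a bounded number of locally irregular subgraphs of a controlled shape: disjoint unions of stars $K_{1,t}$ with $t \ge 2$, and bipartite graphs in which one side of the bipartition carries only even degrees and the other only odd degrees (such a graph is automatically locally irregular, since adjacent vertices then receive degrees of opposite parity). After finitely many such extractions the residue is bipartite, and the bound $\ci \le 10$ disposes of it. Counting how many pieces of each type are produced, multiplying by the sizes of their palettes, and adding the $3$ colours spent on $F$ together with the $O(1)$ colours for the collected exceptional edges, is then tuned to give the asserted total of $328$.

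The step I expect to be the main obstacle is keeping the \emph{number} of pieces extracted from $H$ bounded by an absolute constant rather than by a quantity growing with $\Delta(G)$; elementary colouring or cutting arguments cost $\Theta(\log \Delta)$ colours, and it is exactly the Eulerian structure secured in the first step that has to be exploited to beat this. A constant but pervasive secondary difficulty is that local irregularity of a colour class can be spoilt by a \emph{single} exceptional component, so every extraction has to be accompanied by small local corrections, and most of the technical work lies in guaranteeing that these corrections do not cascade.
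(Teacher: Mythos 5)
Your outline leaves the decisive step unproved, and it is not a technicality. Everything hinges on extracting from the even graph $H$ a number of locally irregular pieces bounded by an \emph{absolute constant}, and you yourself flag this as the expected main obstacle without supplying any mechanism for it: Eulerian structure alone does not prevent a peeling argument from costing $\Theta(\log\Delta)$ colours. The actual proof (Theorem~4.6 of Bensmail, Merker and Thomassen, whose skeleton is reproduced in this paper's proof of Theorem~\ref{thm:gen}) takes a different route and never reduces to even graphs. The only parity reduction it uses is the cheap one: pass to connected graphs of even \emph{size}, at the cost of one extra colour. Its real engine is a density split: $G$ is decomposed into a graph $H$ of minimum degree at least $10^{10}$ and a $(2\cdot 10^{10}+2)$-degenerate graph $D$ all of whose components have even size. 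The dense part is handled by Przyby{\l}o's theorem that graphs of minimum degree at least $10^{10}$ satisfy $\ci\le 3$ --- a heavy external ingredient entirely absent from your proposal, and the only known way to colour the dense part with $O(1)$ colours. For $D$ the logarithmic loss is \emph{accepted}, not avoided: a $d$-degenerate graph of even size splits into $\lceil\log_2(d+1)\rceil+1$ even-size bipartite graphs, each coloured with $9$ colours, and since $d$ is now the absolute constant $2\cdot 10^{10}+2$ this gives $9\cdot 36=324$, hence $1+3+324=328$. So the trick is not to beat the $\log$-cost but to first cap the relevant degeneracy at an absolute constant by shunting all dense structure into Przyby{\l}o's regime.

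A secondary gap: your first step is also shakier than you present it. Removing a minimal $T$-join $F$ leaves $H=G-F$ with all degrees even, but neither $F$ nor $H$ need be decomposable (components of $H$ can be odd cycles, and these are not repaired by the vague ``shuffling a few edges''), and the announced extractions of star forests and even/odd bipartite graphs from $H$ are never specified. As written, the argument neither terminates with a constant number of colours nor accounts for where the specific value $328$ comes from, other than by fiat.
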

On the other hand, Conjecture~\ref{con:main} has already been confirmed for several special classes of graphs, 
e.g. graphs with minimum degree at least $10^{10}$~\cite{Prz15}
and $k$-regular graphs, for $k \ge 10^{7}$~\cite{BauBenPrzWoz15}.

In this paper we consider two particular classes of graphs, bipartite and subcubic, establishing some new upper bounds 
and confirming Conjecture~\ref{con:main} for several subclasses. 

The locally irregular edge-coloring of bipartite graphs has already been intensively studied. 
Conjecture~\ref{con:main} has been proven in affirmative for decomposable trees~\cite{BauBenPrzWoz15}, 
and just recently, 
Baudon et al.~\cite{BauBenSop15} introduced a linear-time algorithm for determining the irregular chromatic index of any tree.
Note that there exist trees with locally irregular chromatic index equal to $3$ (see Fig.~\ref{fig:tree}).
\begin{figure}[htp!]
	$$
		\includegraphics{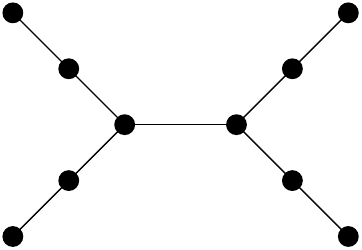}
	$$
	\caption{A tree with locally irregular chromatic index equal to $3$.}
	\label{fig:tree}
\end{figure}
However, if the maximum degree of a tree is at least $5$, it admits a locally irregular edge-coloring with at most two colors~\cite{BauBenSop15}.

In~\cite{BauBenPrzWoz15}, using a result from~\cite{HavParSam14}, the authors observed the following result 
regarding regular bipartite graphs.
\begin{theorem}[Baudon et al., 2015]
	Let $G$ be a regular bipartite graph with minimum degree at least $3$. Then
	$$
		\ci(G) \le 2\,.
	$$			
\end{theorem}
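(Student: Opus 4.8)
The plan is to reduce the statement to a structural result on bipartite graphs, by means of a degree identity available only for regular graphs. The first step is the following observation, valid for every $d$-regular graph $G$ with $d\ge 1$: $\ci(G)\le 2$ (and, in particular, $G$ is decomposable) if and only if $G$ admits a spanning subgraph $H$ with $d_H(u)\neq d_H(v)$ for every edge $uv\in E(G)$. To see this, let $E(G)=E_1\cup E_2$ be any partition of the edge set and put $H:=G[E_1]$. For each vertex $v$ one has $d_{G[E_2]}(v)=d-d_H(v)$, so for every edge $uv$ the inequalities $d_H(u)\neq d_H(v)$ and $d_{G[E_2]}(u)\neq d_{G[E_2]}(v)$ are equivalent; since every edge of $G$ lies in $E_1$ or in $E_2$, the two graphs $G[E_1]$ and $G[E_2]$ are simultaneously locally irregular exactly when $d_H(u)\neq d_H(v)$ for all $uv\in E(G)$. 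As a $d$-regular graph with $d\ge 1$ is itself never locally irregular, such an $H$ is neither empty nor all of $G$, so it corresponds to a genuine $2$-colouring and $\ci(G)=2$. (Equivalently, such an $H$ is precisely a $2$-detectable edge-colouring of the regular graph $G$, the object studied in \cite{HavParSam14}.)

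The second step is to invoke the cited result of \cite{HavParSam14}, which provides exactly such a spanning subgraph $H$ for every $d$-regular bipartite graph $G$ with $d\ge 3$; together with the first step this yields $\ci(G)\le 2$. I expect essentially all of the difficulty to lie in this step, i.e.\ in the construction carried out in \cite{HavParSam14}. In a self-contained argument the natural attempt is to look for $H$ whose vertex degrees fall into disjoint ranges on the two parts $A,B$ — for instance $d_H(a)\in\{1,2\}$ for every $a\in A$ and $d_H(b)\notin\{1,2\}$ for every $b\in B$ — but then the set $\{\,b:d_H(b)\ge 3\,\}$ must dominate $A$ while each such $a$ retains at most two edges towards that set, and making these domination and degree-constrained factor conditions compatible uniformly over all $d$-regular bipartite graphs is the genuinely delicate point.

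Finally, the hypothesis $d\ge 3$ cannot be dropped. The graph $C_6$ is $2$-regular and bipartite, yet $\ci(C_6)=3$: if $x_1,\dots,x_n\in\{0,1\}$ record which edges of $C_n$ belong to a candidate subgraph $H$, then the degree of the $i$-th vertex in $H$ equals $x_{i-1}+x_i$, and this is a proper vertex-colouring of $C_n$ only if $x_{i-1}\neq x_{i+1}$ for every $i$; on $C_6$ that would force three pairwise distinct values in $\{0,1\}$, which is impossible, so no suitable $H$ exists and, by the first step, $\ci(C_6)>2$.
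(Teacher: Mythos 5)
Your argument is essentially the paper's: the paper offers no proof of this statement at all, only the remark that Baudon et al.\ observed it \emph{using a result from}~\cite{HavParSam14}, and that observation is precisely your first step --- for a $d$-regular graph one has $d_{G-H}(v)=d-d_H(v)$, so a $2$-colouring into locally irregular parts is equivalent to a spanning subgraph $H$ with $d_H(u)\neq d_H(v)$ on every edge, i.e.\ to a $2$-detectable colouring of the regular graph. The substantive construction of such an $H$ for $d$-regular bipartite graphs with $d\ge 3$ is exactly what \cite{HavParSam14} supplies and what both you and the paper import as a black box, so the two routes coincide (and your $C_6$ computation correctly confirms that the hypothesis $d\ge 3$ cannot be dropped).
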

Bensmail et al.~\cite{BenMerTho16} even asked if two colors suffice for all such graphs with the regularity assumption removed.
Moreover, they established an upper bound for general bipartite graphs.
\begin{theorem}[Bensmail et al., 2016]
	\label{thm:bipold}
	Let $G$ be a decomposable bipartite graph. Then
	$$
		\ci(G) \le 10\,.
	$$				
\end{theorem}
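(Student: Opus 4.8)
The plan is to reduce to connected $G$ with $\Delta(G)\ge 3$, and then to split $E(G)$ into an $O(1)$-degenerate ``sparse'' part and a ``dense'' part of large minimum degree, colouring the two with disjoint palettes.

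\emph{Reductions.} For a disconnected $G$, a locally irregular edge-colouring of $G$ restricts to one of each component and vice versa, so it suffices to treat connected $G$; since $G$ is bipartite and decomposable, no component is an odd path. If $\Delta(G)\le 2$ then $G$ is a path of even length or an even cycle, and one verifies directly that $\ci(G)\le 3$: write $E(G)$ as a concatenation of copies of $P_3$ and properly colour the resulting path or cycle of blocks with $2$ or $3$ colours, so that equal-coloured blocks are vertex-disjoint and every colour class is a disjoint union of copies of $P_3$. So assume $\Delta(G)\ge 3$.

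\emph{Main step.} Iteratively delete vertices of current degree less than a small constant $\delta_0$; let $K$ be the remainder (with $\delta(K)\ge\delta_0$ if nonempty) and $D=G-E(K)$. Reading the deletion order backwards shows $D$ is $(\delta_0-1)$-degenerate, hence (Nash--Williams) has arboricity at most $\delta_0$, so $E(D)$ is the union of at most $\delta_0$ forests. Each forest is coloured componentwise: a decomposable tree-component gets at most $3$ colours (the cited bound for trees), while the only obstructions, namely single-edge and odd-path components, are not coloured in isolation but are broken up using edges of $K$ (or of other forests) incident to their ends -- this is where one spends a few extra colours. The graph $K$ is bipartite with $\delta(K)\ge\delta_0$; taking $\delta_0$ a small constant (around $3$), I would colour $K$ with very few colours by pushing the argument behind the regular bipartite case (via \cite{HavParSam14}), e.g.\ by completing $K$ to a regular bipartite graph, transferring the colouring, and repairing local irregularity at the padded vertices. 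Since $D$ and $K$ use disjoint palettes, every colour class lies entirely within one of them and is locally irregular as a subgraph of $G$; adding up the two budgets yields $\ci(G)\le 10$.

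\emph{Main obstacle.} Two points are delicate. First, the single-edge and odd-path components of the forests are genuinely non-decomposable on their own, so $D$ cannot be coloured independently of $K$: each such component must be absorbed, using edges of $K$ or of neighbouring forests at its endpoints, and verifying that this can be done simultaneously for all of them -- without cascading and without overspending colours -- is the real combinatorial heart of the argument. Second, everything hinges on keeping $\delta_0$ small while still colouring the dense core $K$ with very few colours; reducing the large-minimum-degree bipartite case cleanly to the regular case, without destroying local irregularity at the vertices whose degrees were padded, is itself nontrivial, and it is the interplay of these two costs that pins the final constant at $10$.
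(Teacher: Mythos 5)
Your proposal has two genuine gaps, and the second is fatal as stated. The decisive step is colouring the core $K$ with $\delta(K)\ge\delta_0$ for a small constant $\delta_0\approx 3$ using ``very few'' colours, by completing $K$ to a regular bipartite graph and repairing local irregularity at the padded vertices. No such tool exists: the theorem that $\ci\le 2$ for dense bipartite graphs (via~\cite{HavParSam14}) requires \emph{regularity}, and whether a small number of colours suffices for bipartite graphs of minimum degree $3$ without the regularity assumption is explicitly recorded in this paper as an open question of Bensmail et al. The only known way to exploit large minimum degree alone is Przyby{\l}o's theorem, which needs $\delta\ge 10^{10}$; with that threshold your degenerate part $D$ has arboricity on the order of $10^{10}$ and the colour budget explodes far past $10$ (this sparse/dense split is in fact exactly how the paper handles \emph{general} graphs in Theorem~\ref{thm:gen}, at the cost of $220$ colours). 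The first gap --- absorbing the single-edge and odd-path components of your forests into $K$ simultaneously and without cascading --- you correctly flag as the combinatorial heart, but you supply no mechanism for it, and it is not routine: an exceptional component may meet the rest of the graph only at vertices where every available colour is already constrained.

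The actual argument for this theorem (due to Bensmail et al., and the one this paper follows and sharpens to $7$) runs through a different structural notion: a bipartite graph is \emph{balanced} if all vertices in one part have even degree. Balanced forests need only $2$ colours (Lemma~\ref{lem:bal}), and balanced bipartite graphs admit small locally irregular colourings via parity-type arguments (in this paper, Corollary~\ref{cor:par} gives $4$; the parity of degrees across an edge forces local irregularity in each colour class). One then decomposes any connected bipartite graph with an even number of edges into a bounded number of balanced pieces, colours them with disjoint palettes, and spends one extra colour to reduce the odd-size case to the even-size case. This route avoids both of your obstacles: there is no dense core to colour, and the parity bookkeeping guarantees every piece is decomposable by construction.
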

They proved the bound above by showing that bipartite graphs with an even number of edges admit a locally irregular edge-coloring
with at most $9$ colors. 
The aim of Section~\ref{sec:bip} is to improve both above results as follows.
\begin{theorem}
	\label{thm:mainbip}
	Let $G$ be a decomposable bipartite graph. Then
	$$
		\ci(G) \le 7\,.
	$$				
	Moreover, if $G$ has an even number of edges, then the upper bound is $6$.
\end{theorem}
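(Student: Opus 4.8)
The plan is to establish the bound first for connected graphs, to trade the ``odd number of edges'' case for the ``even'' one by splitting off a single locally irregular subgraph, and then to handle the connected even case by a peeling argument that exploits bipartiteness.

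I would begin with two easy reductions. Since $G$ is bipartite, a connected graph fails to be decomposable only when it is a path of odd length; consequently every component of a decomposable bipartite graph is itself a connected decomposable bipartite graph, and colours may be reused across components, so it suffices to colour each component. If, moreover, $G$ is connected with $\Delta(G)\le 2$, then $G$ is a path or a cycle, and being decomposable and bipartite it must be an even path or an even cycle; in either case $\ci(G)\le 3$ and $|E(G)|$ is even. Hence I may assume that $G$ is connected with $\Delta(G)\ge 3$. If $|E(G)|$ is odd, I pick a vertex $v$ of degree at least $3$ together with three edges incident to $v$; these induce a claw $K_{1,3}$, which is locally irregular, so I give them a private seventh colour, after which $G'=G-E(K_{1,3})$ has an even number of edges. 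Choosing $v$ and the three edges so as to keep $G'$ free of exceptional small components (or, when this is unavoidable, finishing by a short direct argument), $G'$ is again a decomposable bipartite graph and, by the even case, receives a $6$-colouring; hence $\ci(G)\le 7$. So everything reduces to the core claim that \emph{every connected bipartite graph $H$ with an even number of edges satisfies $\ci(H)\le 6$}.

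For the core claim I would use the splitting-off strategy behind Theorem~\ref{thm:bipold}, but pushed further by exploiting bipartiteness more aggressively, with the aim of improving the bound from $9$ to $6$. Concretely, peel off from $H$ a first locally irregular subgraph $H_1$ (one colour) chosen so that the remainder $H - E(H_1)$ has a very restricted structure --- small maximum degree, or bounded degeneracy --- and then colour that remainder with at most five further colours. Bipartiteness helps at both stages: when producing $H_1$ one can, as in the regular bipartite case, invoke a matching-type/Hall-type argument to make the leftover's degrees drop sharply, and for a bipartite leftover of small maximum degree one already has good locally irregular colourings (for instance, a decomposable bipartite graph of maximum degree at most $2$ is a union of even paths and even cycles and needs at most $3$ colours). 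A second, more hands-on route to the same bound starts from the classical fact that a connected graph with an even number of edges decomposes into paths with two edges; splitting these $P_3$'s according to the side of the bipartition containing their centres produces two subgraphs, one with all degrees on the $A$-side even and one with all degrees on the $B$-side even, and one is then left with colouring the $P_3$'s inside each of these two pieces, with three colours each, so that every colour class is locally irregular.

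The hard part, whichever route one takes, is this last colouring of the structured leftover (equivalently, of each $P_3$-packing): there, conflicts can only occur at the ``leaf'' vertices, where several same-colour $P_3$'s meet and may create a vertex one of whose neighbours has the same degree, and keeping all of these leaf degrees under control simultaneously --- with only the allotted colours --- forces a careful, non-greedy colouring together with a judicious initial choice (of $H_1$, or of the $P_3$-decomposition). This is precisely where one must lean on the finer tools --- the $P_3$-decomposition theorem, results on edge-partitioning bipartite graphs with prescribed degree parities, good bounds for bipartite graphs of bounded maximum degree, and a sharpened form of the Bensmail--Merker--Thomassen splitting-off argument --- rather than on an ad hoc construction. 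The remaining points are routine: the few exceptional small graphs, the disconnection that the claw removal may cause in the odd case, and the bookkeeping needed to obtain $6$ colours instead of $7$ when $H$ is disconnected with several components of odd size, where one colours the components as above but merges the leftover claws of two distinct odd components into one colour class, which is legitimate since they lie in different components.
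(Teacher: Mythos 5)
Your proposal does not actually prove the core claim; it only describes two possible strategies for it and then explicitly defers the decisive step. The entire content of Theorem~\ref{thm:mainbip} beyond what was already known lives in the statement ``every connected bipartite graph with an even number of edges satisfies $\ci \le 6$'', and your treatment of it ends with an acknowledgement that the colouring of the structured leftover ``forces a careful, non-greedy colouring'' for which ``one must lean on the finer tools''. In particular, your second route splits a $P_3$-decomposition by the side of the bipartition containing the centres, obtaining two \emph{balanced} subgraphs (all degrees even on one side), and then asserts that each can be coloured with three colours. No argument is given for that assertion, and it is exactly the hard part: the paper itself only proves the bound $4$ for balanced bipartite graphs (Corollary~\ref{cor:par}), and it does not reach $6$ as $3+3$ but by the known decomposition of an even-sized bipartite graph into a balanced graph (now $4$ colours) plus a balanced forest ($2$ colours by Lemma~\ref{lem:bal}), following the earlier $9=7+2$ scheme of Bensmail et al.; the odd case then costs one extra colour, as in your claw-removal reduction.

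The missing idea is the concrete mechanism that colours a balanced bipartite graph with few colours. The paper's mechanism is the vertex-parity edge-colouring of Theorem~\ref{thm:parity}: give signature $0$ to the all-even side and $1$ to the other side; a $4$-colouring in which every colour appears an even number of times at each $0$-vertex and an odd number of times at each $1$-vertex makes every colour class locally irregular, because the two endpoints of any edge have degrees of different parities in that class. (The exceptional case $|\pi^{-1}(1)|=3$ is handled separately via fully subdivided graphs, Theorem~\ref{thm:sub}.) Nothing in your proposal plays this role: neither the ``peel off $H_1$ so the remainder has small maximum degree'' route nor the ``$3$ colours per balanced piece'' route is supported by any stated lemma, and the numerical bookkeeping ($1+5$ or $3+3$) is asserted rather than derived. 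Until you supply a proof that balanced bipartite graphs need at most $4$ colours (or some equivalent), the argument does not close.
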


Consequently, the improvement in Theorem~\ref{thm:mainbip}, also gives the following bound for general graphs 
(again analogously to the proof of Theorem~$4.6$ in~\cite{BenMerTho16}).
\begin{theorem}
	\label{thm:gen}
	Let $G$ be a decomposable graph. Then
	$$
		\ci(G) \le 220\,.
	$$		
\end{theorem}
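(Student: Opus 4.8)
The plan is to follow the derivation of the general bound $328$ in~\cite{BenMerTho16} (the proof of Theorem~4.6 there), changing only the place where that derivation invokes the bipartite case and feeding in the stronger estimate of Theorem~\ref{thm:mainbip}. No new construction is needed; the improvement comes entirely from supplying a better input to the existing reduction.

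Recall the shape of that reduction. Given a decomposable graph $G$, the argument of~\cite{BenMerTho16} produces a bounded family of auxiliary bipartite graphs, each with an even number of edges, together with a ``remainder'' part, so that any locally irregular edge-colourings of the auxiliary bipartite graphs, using pairwise disjoint palettes, combine with a fixed locally irregular edge-colouring of the remainder to give one for $G$. Counting colours, one reads off that if every bipartite graph with an even number of edges has a locally irregular edge-colouring with at most $b$ colours, then $\ci(G)\le 36b+4$: the family contributes $36b$ colours ($36$ blocks, $b$ colours each) and the remainder at most $4$ more, this last constant being independent of $b$. With $b=9$ this is the $36\cdot 9+4=328$ of~\cite{BenMerTho16}; with $b=6$, which is permitted by the ``moreover'' clause of Theorem~\ref{thm:mainbip}, it gives $36\cdot 6+4=220$.

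Two things have to be checked, and both are already settled in~\cite{BenMerTho16}. First, one verifies that the reduction uses the bipartite bound purely as a black box, applied separately to each block, so that lowering it from $9$ to $6$ changes nothing else and, in particular, does not affect the structural constants $36$ and $4$. Second, one makes sure that each bipartite block can indeed be taken to have an even number of edges — so that the value $6$ rather than $7$ applies — at the cost of only a constant amount of edge rerouting into the remainder, and that after this rerouting neither the remainder nor any block degenerates into a graph admitting no locally irregular edge-colouring (such as $K_2$ or an odd cycle). The main obstacle is thus not a new idea but this bookkeeping: confirming that the parity adjustments needed to invoke the sharper clause of Theorem~\ref{thm:mainbip} can be absorbed without disturbing the rest of the machinery of~\cite{BenMerTho16}.
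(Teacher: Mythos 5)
Your proposal is correct and follows essentially the same route as the paper: reuse the reduction from the proof of Theorem~4.6 in~\cite{BenMerTho16} verbatim and substitute the improved even-size bipartite bound of $6$ from Theorem~\ref{thm:mainbip}, giving $6\cdot(\lceil\log_2(2\cdot 10^{10}+3)\rceil+1)=6\cdot 36=216$ colors for the degenerate part, plus $3$ for the high-minimum-degree part (Przyby{\l}o) and $1$ for the reduction to even-sized connected graphs — which is exactly your $36b+4$ with $b=6$. Your worry about arranging even-sized blocks is already discharged by Theorem~4.3 of~\cite{BenMerTho16}, which produces bipartite pieces with all components of even size, so no extra bookkeeping is needed.
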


In the second part of the paper, in Section~\ref{sec:cubic}, we consider graphs with maximum degree $3$. 
We expect Conjecture~\ref{con:main} to be true for this class of graphs and we make the first step toward proving it by establishing
an upper bound of $4$.
\begin{theorem}
	\label{thm:main}
	Let $G$ be a decomposable subcubic graph. Then 
	$$
		\ci(G) \le 4\,.
	$$
\end{theorem}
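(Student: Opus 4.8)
The plan is to prove the bound by induction on $|E(G)|$, after the usual reduction to connected graphs: a graph is decomposable if and only if each of its connected components is, and an edge‑coloring is locally irregular precisely when its restriction to every component is, so it suffices to colour the components separately (the connected subcubic graphs that are \emph{not} decomposable — the odd paths, the odd cycles, and the small exceptional graphs characterised by Baudon et al.~\cite{BauBenPrzWoz15} — are excluded by hypothesis). So assume $G$ is connected and decomposable with $\Delta(G)\le 3$.

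The heart of the argument is a list of reducible configurations handling every $G$ that is \emph{not} cubic, i.e.\ has a vertex of degree $1$ or $2$. In that situation I would isolate a small locally irregular subgraph $H$ sitting on a low‑degree vertex — typically a path $P_3$, or a star $K_{1,2}$ or $K_{1,3}$, or, when $G$ has a bridge, the two sides of that bridge — delete $E(H)$, apply the induction hypothesis to $G-E(H)$, and then reinstall $H$ using the colour that induction leaves free at the attaching vertex, together with a bounded amount of local recolouring there. Some care is needed so that $G-E(H)$ is again decomposable (one chooses $H$ so as not to create an odd path, odd cycle, or exceptional component), and bridges are most cleanly treated by cutting $G$ into the two strictly smaller graphs on either side, colouring each, and recombining along the bridge; vertices of degree $2$ lying on long induced paths or cycles are handled analogously, by excising a suitable segment that together with its surroundings forms a locally irregular piece.

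The remaining, genuinely hard case is that $G$ is a bridgeless cubic graph, where there is no low‑degree vertex to pry open. Here I would apply Petersen's theorem to write $G=M\cup F$ with $M$ a perfect matching and $F$ a $2$‑factor, that is, a disjoint union of cycles $C^1,\dots,C^k$. Roughly: colour each cycle $C^j$ by grouping consecutive edges into paths $P_3$ and alternating two of the colours around it; a cycle whose length is divisible by $4$ closes up cleanly with two colours, while an odd cycle or one of length $\equiv 2\pmod 4$ is left with a local defect (a leftover edge, or a ``seam'' where two equally‑coloured $P_3$'s meet) that is repaired by recruiting one incident edge of $M$ into that colour class. The edges of $M$ not used for such repairs receive the fourth colour; since a matching is never locally irregular, each of them must in addition be fused with one incident cycle edge so that the fourth class becomes a disjoint union of paths $P_3$. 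One then checks — working cycle by cycle, and along the alternating cycles that $M$ forms together with each colour class — that after all these local moves every one of the four colour classes is a disjoint union of locally irregular graphs (paths $P_3$, stars, double stars).

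The main obstacle is precisely this last step: making the recolourings in the bridgeless cubic case globally consistent, so that all four classes are simultaneously locally irregular rather than each local fix spoiling a neighbouring class. The two pressure points are the cycles of $F$ whose length is not divisible by $4$ — which genuinely need a contribution from $M$, so $M$ cannot simply be handed the fourth colour wholesale — and $M$ itself, which must be ``thickened'' by cycle edges without destroying the classes those edges were taken from. Designing one self‑consistent global scheme, as opposed to local patches that interfere with one another, is where the real work lies; by contrast, once the right reducible configurations are pinned down the non‑cubic cases amount to bookkeeping.
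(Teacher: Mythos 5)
There is a genuine gap: what you have written is a plan rather than a proof, and you concede as much yourself when you say that ``designing one self-consistent global scheme \dots is where the real work lies.'' Concretely, two essential steps are missing. First, in the non-cubic case, the reinstallation of the excised piece $H$ is not just bookkeeping: a $2$-path whose endpoints both have degree $3$ meets up to four other edges, so all four colours may already appear around it and no colour is ``free'' at the attaching vertices. Purely local recolouring does not resolve this; one is forced into global Kempe-style swaps along two-coloured components, and one also needs an invariant controlling \emph{how} two equally coloured pieces may meet (otherwise the union of the reinstalled $P_3$ with an existing colour class of the same colour can fail to be locally irregular, e.g.\ by creating two adjacent vertices of degree $2$ in that class). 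Second, in the bridgeless cubic case your Petersen-based scheme is left entirely unspecified at its critical point: each $2$-factor cycle of length not divisible by $4$ demands a matching edge as a repair, a single matching edge is shared between two cycles that may both demand it, and the unrepaired matching edges must each be fused with a cycle edge without damaging the class that edge came from. No rule is given that makes these demands simultaneously satisfiable, and it is not evident that one exists.

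For contrast, the paper avoids both difficulties by working with a \emph{pertinent} edge-decomposition of $G$ into $2$-paths and at most one copy of $K_{1,3}$ or $K_{1,3}''$ per component (Theorem~\ref{thm:decompose}), and proving the stronger Theorem~\ref{thm:mainext}: a minimal counterexample carries such a decomposition in which each element is monochromatic and two equally coloured incident elements may only meet at a central vertex. The bulk of the work (Claims~2--6) is exactly the swap machinery on two-coloured subgraphs $G_{\varphi}^{\{a,b\}}$ that your sketch replaces with ``a bounded amount of local recolouring''; the terminal configuration that survives all reductions is a subdivided cubic graph, which is dispatched by Vizing's theorem applied to a contracted auxiliary graph. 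If you want to salvage your outline, the most promising move is to import that invariant on where equally coloured elements may meet and to replace the ad hoc local repairs by swaps along two-coloured components; the Petersen decomposition, as it stands, does not obviously lead anywhere simpler.
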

We also believe that our method might be appropriate for further investigations of the locally irregular edge-coloring of graphs with small maximum degrees.


\section{Bipartite graphs}
	\label{sec:bip}

In order to prove Theorem~\ref{thm:mainbip}, we will use several auxiliary results.
Following the definition of Bensmail et al.~\cite{BenMerTho16}, 
we say that a decomposable bipartite graph is \textit{balanced} 
if all the vertices in one of the two partition parts have even degrees.
The upper bound for locally irregular chromatic index of balanced forests is given in the following lemma.
\begin{lemma}[Bensmail et al., 2016 (Lemma 3.2)]
	\label{lem:bal}
	Let $F$ be a balanced forest. Then $F$ admits a locally irregular edge-coloring with at most $2$ colors.
	Moreover, for each vertex $v$ in the partition with no vertex of odd degree, all edges incident to $v$ have the same color.
\end{lemma}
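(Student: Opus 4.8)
The plan is to reduce the statement to a single tree and then to a question about $2$-colouring one side of the bipartition. Since a forest is a disjoint union of trees and isolated vertices contribute nothing, I would fix a tree $T$ with at least one edge, let $A,B$ be its partition classes with every vertex of $A$ of even degree (so every leaf of $T$ lies in $B$), and observe that, $T$ being bipartite, every edge has exactly one endpoint in $A$. Hence requiring all edges at each $A$-vertex to receive a common colour is literally the same as fixing a map $c\colon A\to\{1,2\}$ and letting each edge inherit the colour of its $A$-endpoint. In the resulting colour class $i$ the only adjacencies are between an $A$-vertex $v$ with $c(v)=i$, which has degree $\deg_T(v)$, and its neighbours $u\in B$, which have degree $d_i(u):=|\{w\in N(u):c(w)=i\}|$; so class $i$ is locally irregular iff $d_i(u)\neq\deg_T(v)$ for every such edge $uv$. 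As $\deg_T(v)$ is \emph{even} for all $v\in A$, it suffices to find $c$ with $d_i(u)$ odd whenever $d_i(u)>0$.

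To build such a $c$ I would first absorb the troublesome vertices. Call $u\in B$ \emph{light} if $\deg_T(u)$ is odd and \emph{heavy} otherwise; colouring the whole neighbourhood of a light vertex with one colour is always safe, since then $d_i(u)=\deg_T(u)$ is odd. So I declare $v\sim v'$ for $v,v'\in A$ sharing a common light neighbour, pass to the transitive closure, and insist that $c$ be constant on each $\sim$-class. Two facts, both forced by acyclicity of $T$, make this work. First, all neighbours of a light vertex lie in a single $\sim$-class, so light vertices impose no constraint between classes. Second, the $\deg_T(u)$ neighbours of a heavy vertex $u$ lie in pairwise distinct $\sim$-classes: if two of them, $v$ and $v'$, were $\sim$-related, a witnessing walk from $v$ to $v'$ (through $A$-vertices and light $B$-vertices) avoids $u$, and concatenating it with the path $v\,u\,v'$ produces a closed walk of $T$ traversing the edge $uv$ an odd number of times, which no closed walk in a tree can do.

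It then remains to colour the $\sim$-classes so that each heavy $u$ sees an odd number of classes in each colour; since $\deg_T(u)$ is even, it is enough to make the number of colour-$1$ classes meeting $N(u)$ odd. This is a linear system over $\mathrm{GF}(2)$ with one variable $x_C$ per class and, for each heavy $u$, one equation $\sum_C x_C=1$ summed over the classes meeting $N(u)$, and I would prove it solvable by showing its matrix has full row rank: if some non-empty set $S$ of heavy vertices had every class adjacent to an even number of members of $S$, then starting from a vertex of $S$ one could walk through classes and vertices of $S$ without ever backtracking forever --- each heavy vertex has degree $\ge 2$ and each class reached meets $\ge 2$ members of $S$ --- which is impossible in the tree obtained from $T$ by contracting every class, together with its internal light vertices, to a point. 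Finally I would record that with this $c$ every $d_i(u)$ is odd (equal to $\deg_T(u)$ at light vertices, an odd class-count at heavy ones, $1$ at leaves), so both colour classes are locally irregular while each $A$-vertex is monochromatic; the ``moreover'' clause follows because in any component with an edge the partition class free of odd degrees is precisely $A$. The main obstacle, I expect, is the middle step: arguing that the light vertices really can be swept into monochromatic classes with no clash against the parity demands of the heavy vertices, and pinning down the full-rank statement that makes the final $\mathrm{GF}(2)$ system unconditionally solvable --- the tree structure is exactly what forces both.
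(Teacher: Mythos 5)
Your proposal is correct, but note that the paper you are being compared against does not actually prove this lemma: it is imported verbatim from Bensmail, Merker and Thomassen (their Lemma~3.2) and used as a black box, so there is no in-paper proof to match. Judged on its own, your argument is a valid, self-contained derivation and takes a more global route than the inductive tree-peeling one would expect from the cited source. The reduction is clean: forcing each vertex of the all-even part $A$ monochromatic turns the problem into choosing $c\colon A\to\{1,2\}$ so that every positive colour-degree $d_i(u)$ at a $B$-vertex is odd, which suffices because $A$-degrees are even. Your two structural facts both hold: neighbours of a light (odd-degree) $B$-vertex are $\sim$-equivalent by definition, and two neighbours of a heavy $B$-vertex cannot be $\sim$-equivalent because the witnessing walk together with the path through $u$ would be a closed walk in $T$ traversing $uv$ exactly once, while every closed walk in a tree traverses each edge an even number of times. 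The final solvability step is also sound, though your ``walk forever without backtracking'' phrasing is better replaced by the standard statement that a graph of minimum degree at least $2$ contains a cycle: the blobs (each $\sim$-class together with its light vertices) are disjoint connected subgraphs of $T$, so contracting them yields a forest $H$ whose edges are exactly the heavy-vertex/class incidences (with no multi-edges, by your second fact); a nontrivial $\mathrm{GF}(2)$-dependency among the heavy-vertex equations would force a subgraph of $H$ of minimum degree $2$, which a forest cannot contain, so the system has full row rank and is solvable. Two small points to make explicit in a final write-up: heavy vertices of degree $0$ must be excluded from the linear system (they carry no constraint, and their ``equation'' $0=1$ would otherwise be vacuously unsolvable), and once $d_1(u)$ is odd, $d_2(u)=\deg_T(u)-d_1(u)$ is automatically odd as well, which is what makes a single equation per heavy vertex enough.
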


The following result is of a similar flavor. 
A \textit{fully subdivided graph} $\mathcal{S}(G)$ is a graph obtained from a graph $G$ by subdividing every edge in $G$ once.
Observe that $\mathcal{S}(G)$ is decomposable for every graph $G$.
\begin{theorem}
	\label{thm:sub}
	Let $G$ be a (multi)graph not isomorphic to an odd cycle. Then
	$$
		\ci(\mathcal{S}(G)) \le 2\,.
	$$
\end{theorem}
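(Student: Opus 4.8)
The plan is to construct an explicit locally irregular edge-coloring of $\mathcal{S}(G)$ using at most $2$ colors by working with a suitable orientation or decomposition of $G$. The key observation is that in $\mathcal{S}(G)$, every original vertex $v$ of $G$ has its incident edges split into "first halves" of subdivided edges, while each subdivision vertex has degree exactly $2$. So a color class will induce a graph that is a disjoint union of stars (centered at original vertices) together with isolated edges; to be locally irregular it suffices that no two stars of the same color share a subdivision vertex in a bad way, and that each nontrivial star avoids having a center of degree matching a neighbor — but neighbors of a star center are degree-$1$ subdivision vertices or degree-$2$ subdivision vertices, so the only obstruction is a monochromatic path of length $2$ through a subdivision vertex joining two centers of the same degree, or a single star that is itself a single edge adjacent to another single edge of the same color through a subdivision vertex.

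\textbf{Key steps.} First I would handle the trivial and small cases: if $G$ has no edges, or is a forest, one can invoke or adapt Lemma~\ref{lem:bal}, since $\mathcal{S}(G)$ of a forest is a balanced forest (every subdivision vertex has even degree $2$), giving $\ci(\mathcal{S}(G))\le 2$ directly. Second, for the general case I would fix an Eulerian-type structure: decompose the edge set of each component of $G$ (after possibly adding a dummy vertex to fix odd-degree vertices, or pairing up odd-degree vertices by paths) so as to obtain an orientation of $G$ in which every vertex has in-degree and out-degree differing appropriately — concretely, I would look for an orientation where at each vertex the in-degree and out-degree are distinct, or more simply decompose $E(G)$ into two subgraphs $G_1, G_2$ and color, in $\mathcal{S}(G)$, the two half-edges of each edge $e\in G_i$ with color $i$. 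Then at each original vertex $v$, its incident half-edges get colors according to which $G_i$ the original edges lie in; the induced color-$i$ subgraph at $v$ is a star whose center $v$ has degree $\deg_{G_i}(v)$, and the subdivision vertices on these edges have color-$i$ degree $2$ (if $e\in G_i$, both halves are color $i$) — so I need $\deg_{G_i}(v)\neq 2$ for this to be locally irregular at $v$, OR $\deg_{G_i}(v)=0$. The cleanest route is therefore: color half-edges so that each color class, restricted to the half-edges at any original vertex, forms a star of size $\neq 2$, and then separately verify the condition at subdivision vertices (which have total degree $2$, so their two half-edges must not both lie in the same color class forming a length-$2$ monochromatic path between two centers of equal degree).

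\textbf{Cleanest plan.} I would instead orient $G$ so that every vertex has out-degree $0$ or out-degree $\ge 1$ with the out-degrees "broken up", then for each edge $uv$ oriented $u\to v$, color the half at $u$ with color $1$ and the half at $v$ with color $2$; then at every original vertex $v$, the color-$1$ star has size $d^+(v)$ and the color-$2$ star has size $d^-(v)$, while every subdivision vertex has one half-edge of each color, so each color class is a disjoint union of stars with NO two stars sharing a vertex — hence locally irregular provided we can ensure that whenever two star centers $u,u'$ of the same color are joined by a path of length $2$ in that color... but in this orientation that cannot happen, since the two halves at a subdivision vertex have different colors. Thus each color class is a vertex-disjoint union of stars $K_{1,t}$ with $t\ge 1$, and a star is locally irregular iff $t\ge 2$ or $t=1$... wait, $K_{1,1}$ is a single edge, which IS locally irregular vacuously (its two vertices have degree $1$, equal — so NOT locally irregular!). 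So I must avoid centers of out-degree (resp. in-degree) exactly $1$ whose pendant subdivision vertex has its other half in the same color — impossible here — OR avoid $d^+(v)=1$, $d^-(v)=1$ simultaneously being problematic. The real obstruction: a color-$1$ component is $K_{1,d^+(v)}$; this fails to be locally irregular exactly when $d^+(v)=1$ (it's $K_2$). So the main obstacle, and the crux of the proof, is finding an orientation of $G$ (with $G$ not an odd cycle) in which $d^+(v)\neq 1$ and $d^-(v)\neq 1$ for every $v$; equivalently, every vertex is a source/sink or has both out- and in-degree $\ge 2$, or we allow a pendant half-edge to be recolored locally. I expect the main difficulty to be precisely this orientation lemma — handling low-degree vertices (degree $1$ and $2$) in $G$, where such orientations are constrained, likely by a parity/component analysis that is exactly where the "not an odd cycle" hypothesis enters, and by merging small stars or recoloring pendant half-edges to fix the $K_2$ defect.
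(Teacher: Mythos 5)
Your reduction of the problem to a half-edge colouring of $\mathcal{S}(G)$, and your analysis of when a colour class (a disjoint union of stars) is locally irregular, are both correct, and your treatment of the forest case via Lemma~\ref{lem:bal} is exactly what the paper does. However, the lemma you single out as the crux --- an orientation of $G$ with $d^+(v)\neq 1$ and $d^-(v)\neq 1$ at every vertex --- is false, and not only for the leaves and degree-$2$ vertices you flag. A vertex of degree $3$ admits only the splits $(0,3),(1,2),(2,1),(3,0)$, so it must be a source or a sink; likewise for degree $2$; and two adjacent sources (or two adjacent sinks) are impossible, so the sources and sinks among the vertices of degree $2$ and $3$ must properly $2$-colour the subgraph they induce. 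Already $K_4$ (all degrees $3$, not bipartite) admits no such orientation, even though $\mathcal{S}(K_4)$ is $2$-colourable by the theorem. So the ``cleanest plan'' fails even after excluding leaves and odd cycles, and the fallback you gesture at --- mixing in edges whose two halves get the same colour, merging small stars, local recolouring --- is precisely where the entire difficulty of the theorem sits. You do not specify how to decide globally which edges are split between the two colours and which are monochromatic so that the constraints at original vertices ($a_i(v)$ avoiding the degree of each incident subdivision vertex) and at subdivision vertices are met simultaneously, and it is not evident that this can be done by a direct construction.

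For comparison, the paper sidesteps any global construction: it takes a minimal counterexample, observes that a non-tree $G$ contains a cycle $C$ (so $\mathcal{S}(C)$ is an even cycle of $\mathcal{S}(G)$), colours $\mathcal{S}(G)-\mathcal{S}(C)$ by minimality, and extends the colouring along $\mathcal{S}(C)$ edge by edge, with a short separate analysis when the deletion of $C$ leaves an odd-cycle component. Your proposal would need an analogous inductive or patching mechanism to be completed; as written, the key step is missing and the stated orientation lemma is false, so there is a genuine gap.
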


\begin{proof}	
	Let $G$ be a counter-example to the theorem with the minimum number of edges.	
	If $G$ is a tree, then the result follows from Lemma~\ref{lem:bal}.	
    So, we may assume that $\mathcal{S}(G)$ contains an even cycle $\mathcal{S}(C)$. 
    Now, consider two cases: $G-C$ contains no component isomorphic to an odd cycle or there is an odd cycle $C'$ as a component. 

	In the former case, by the minimality, 
	the graph $\mathcal{S}(G) - \mathcal{S}(C)$ admits a locally irregular edge-coloring $\varphi$ with at most $2$ colors. 
	Color the edges of $\mathcal{S}(C)$ in such a way that the two edges incident with any vertex $v$ of degree at least $3$ 
    are colored by an arbitrary color used at $v$ in $\varphi$. 
    Finally, we complete the coloring $\mathcal{S}(G)$ by coloring the rest of the edges,
    all incident with the vertices of degree $2$ in $G$. It is easy to see that this is always possible.
	
	In the latter case, we consider two subcases. If $G-C'$ is not an odd cycle, 
	we proceed as in the former case with the graph $\mathcal{S}(G) - \mathcal{S}(C')$. 
	Hence, we may assume that $G$ is a union of two odd cycles. 
	If $C$ and $C'$ have only one vertex in common, then there is a locally irregular edge-coloring of $\mathcal{S}(G)$
	such that we color the edges incident to the vertex of degree $4$ with one color, and complete the coloring of the two remaining 
	paths.
	If $C$ and $C'$ have more than one vertex in common, 
	then there is even cycle $C''$ in $G$ and we consider $C''$ as the cycle in induction.     
\end{proof}

We are also confident that the following question has an affirmative answer.
\begin{question}
	Is every connected bipartite graph with all vertices in one partition of even size, which is not a cycle of length $4k+2$, locally irregular $2$-edge-colorable?
\end{question}

We continue by introducing a type of edge-colorings presented in~\cite{LuzPetSkr16}.
Given a graph $G$, a mapping $\pi:V(G)\to \{0,1\}$ is a \textit{vertex signature} for $G$, and a pair $(G, \pi)$ is called a \textit{parity pair}.
A \textit{vertex-parity edge-coloring} of a parity pair $(G,\pi)$ is a (not necessarily proper) edge-coloring
such that at every vertex $v$ each appearing color $c$ is in parity accordance with $\pi$, i.e. the number of edges of color $c$ incident to $v$ is even if $\pi(v) = 0$,
and odd if $\pi(v) = 1$. 
The \textit{vertex-parity chromatic index} $\chi_{p}'(G,\pi)$ is the least integer $k$ for which $(G,\pi)$ is $k$-edge-colorable.
Clearly, not every parity pair admits a vertex-parity edge-coloring.
The following two are necessary conditions for the existence of $\chi_{p}'(G,\pi)$:
\begin{itemize}
	\item [$(P_1)$] Every vertex $v$ of $(G,\pi)$ with $\pi(v) = 0$ has even degree in $G$.
	\item [$(P_2)$] In every component of $G$, there are zero or at least two vertices with the vertex signature value $1$.
\end{itemize}
Whenever $(P_1)$ and $(P_2)$ are fulfilled, $\pi$ is a \textit{proper vertex signature} of $G$ and $(G,\pi)$ is a \textit{proper parity pair}.
In~\cite{LuzPetSkr16}, it was proven that $6$ colors always suffice for a vertex-parity edge-coloring of any (multi)graph, and a characterization of the graphs with 
the vertex-parity chromatic index equal to five and six was given. In particular, by simplifying Corollary~$2.7$ from~\cite{LuzPetSkr16}, we have the following.
\begin{theorem}[Lu\v{z}ar et al., 2016]
    \label{thm:parity}
    Let $G$ be a connected graph, and let $(G,\pi)$ be a proper parity pair. If $|\pi^{-1}(1)| \neq 3$, then 
    $$
    	\chi_{p}'(G,\pi) \leq 4\,.
    $$
\end{theorem}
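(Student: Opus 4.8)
Being a restatement of a result of~\cite{LuzPetSkr16}, the statement is most naturally obtained from the two facts recalled just above: that $\chi_{p}'(G,\pi)\le 6$ for \emph{every} proper parity pair, and that the connected proper parity pairs with $\chi_{p}'(G,\pi)\in\{5,6\}$ admit an explicit characterisation. The plan is therefore to peel off the degenerate case and then invoke this characterisation. If $\pi^{-1}(1)=\emptyset$, condition $(P_1)$ forces every vertex of the connected graph $G$ to have even degree, so $G$ is Eulerian and colouring all of $E(G)$ with a single colour is already a vertex-parity edge-colouring; hence $\chi_{p}'(G,\pi)=1\le 4$. From now on $\pi^{-1}(1)\neq\emptyset$, and the claim reduces to the bookkeeping fact that \emph{every connected proper parity pair appearing in the list of pairs with $\chi_{p}'\ge 5$ has exactly three vertices of signature~$1$}. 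Granting this, a connected proper parity pair with $|\pi^{-1}(1)|\neq 3$ is not extremal, so $\chi_{p}'(G,\pi)\le 4$; concretely, this is exactly the effect of specialising Corollary~$2.7$ of~\cite{LuzPetSkr16} to connected graphs and rewriting its hypotheses in the single clean form ``$|\pi^{-1}(1)|\neq 3$''.

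For intuition on why four colours suffice in the generic case, note that the local condition says a colour present at $v$ may be used only an even number of times there when $\pi(v)=0$ and only an odd number of times there when $\pi(v)=1$; since $(P_1)$ already makes every $\pi(v)=0$ vertex of even degree, each monochromatic subgraph need only be Eulerian-like away from $\pi^{-1}(1)$, while at the vertices of $\pi^{-1}(1)$ any splitting of the degree into odd parts is admissible. One would build the classes accordingly: a ``bulk'' class carrying a subgraph that is even outside $\pi^{-1}(1)$, together with a small fixed number of auxiliary classes carrying forests that repair the parities at the vertices of $\pi^{-1}(1)$, the corrections being pushed around along a spanning tree from the leaves towards the root. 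Four colours give just enough room for such a scheme whenever no pathological configuration is present.

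The genuine obstacle --- and the reason the hypothesis is needed --- is pinning down \emph{which} configurations cannot be finished with four colours. These turn out to be precisely the sporadic connected proper parity pairs with $|\pi^{-1}(1)|=3$: with three odd-parity targets the parity defects cannot be balanced off as above, and in the worst such graphs the residual defect cannot be absorbed without introducing a fifth colour at some vertex, so there one really does need five or six colours. Establishing that this is the \emph{only} obstruction is the heart of~\cite{LuzPetSkr16}; there it is done via the explicit characterisation, while a self-contained proof would require a careful but finite case analysis of the small graphs. Once the characterisation is in hand, Theorem~\ref{thm:parity} follows at once.
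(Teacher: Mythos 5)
The paper does not prove this statement itself --- it is quoted directly as a specialisation of Corollary~2.7 of~\cite{LuzPetSkr16} --- and your proposal does essentially the same thing: after disposing of the trivial case $\pi^{-1}(1)=\emptyset$, you defer the entire content to the characterisation of the extremal pairs in that reference. This matches the paper's treatment, so the proposal is acceptable as a citation-based argument, though (like the paper) it contains no independent proof of the key fact that every connected proper parity pair with $\chi_{p}'\ge 5$ has exactly three vertices of signature $1$.
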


This immediately implies the result for bipartite graphs with one partition containing only vertices of even degrees.
\begin{corollary}
	\label{cor:par}
	Let $G$ be a balanced graph. Then
	$$
		\ci(G) \le 4\,.
	$$	
\end{corollary}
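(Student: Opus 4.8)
The plan is to work component by component and feed each component into Theorem~\ref{thm:parity}, using Theorem~\ref{thm:sub} to patch the one case that theorem does not cover. First I would fix a bipartition $(A,B)$ of $G$ realizing balancedness, so that every vertex of $A$ has even degree; since an edge-coloring is locally irregular exactly when its restriction to every connected component is, and the same four colors may be reused across components, it suffices to color each component separately. So let $H$ be a connected component with at least one edge, with parts $A_H=A\cap V(H)$ and $B_H=B\cap V(H)$, and note that every vertex of $A_H$ has positive even degree.

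Define the signature $\pi$ on $H$ by $\pi(v)=0$ for $v\in A_H$ and $\pi(v)=1$ for $v\in B_H$. Then $(P_1)$ holds because $A_H$ has only even-degree vertices, and $(P_2)$ holds because $|B_H|\ge 2$: if $B_H=\{b\}$ then $H$ would be a star centered at $b$ whose leaves (in $A_H$) would have odd degree $1$, a contradiction. Thus $(H,\pi)$ is a proper parity pair. When $|B_H|\ne 3$, Theorem~\ref{thm:parity} gives a vertex-parity edge-coloring of $(H,\pi)$ with at most $4$ colors, and I claim it is automatically locally irregular: in each color class, every incident vertex of $A_H$ has positive even degree while every incident vertex of $B_H$ has odd degree, so the two endpoints of each edge of the class (one in $A_H$, one in $B_H$) have degrees of opposite parity, hence distinct.

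It remains to handle a component $H$ with $|B_H|=3$. Every vertex of $A_H$ then has at most three neighbors, all lying in $B_H$, and even degree, hence degree exactly $2$; so $H=\mathcal{S}(M)$, where $M$ is the connected multigraph on $B_H$ in which each vertex of $A_H$ becomes an edge joining its two neighbors. If $M$ is not an odd cycle, Theorem~\ref{thm:sub} gives $\ci(H)\le 2$. The only connected multigraph on three vertices that is an odd cycle is the triangle, and then $H=\mathcal{S}(C_3)=C_6$, which we color by giving the three pairs of consecutive edges three distinct colors, each color class being a copy of $P_3$; so $\ci(H)\le 3$. In all cases $\ci(H)\le 4$, and combining the colorings over all components finishes the proof.

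I expect the only real obstacle to be exactly the excluded case $|\pi^{-1}(1)|=3$ of Theorem~\ref{thm:parity}; the point that resolves it is that balancedness is rigid enough to force such a component to be a fully subdivided multigraph on three vertices, bringing Theorem~\ref{thm:sub} into play except for the harmless graph $C_6$.
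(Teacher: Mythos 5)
Your proposal is correct and follows essentially the same route as the paper: apply Theorem~\ref{thm:parity} with the signature $0$ on the even side and $1$ on the other side when $|\pi^{-1}(1)|\neq 3$, and observe that the case $|\pi^{-1}(1)|=3$ forces a fully subdivided multigraph so that Theorem~\ref{thm:sub} applies. You are in fact slightly more careful than the paper, which does not explicitly address the subcase where that multigraph is a triangle (so $H=C_6$, needing a third color).
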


\begin{proof}	
	Let $(\mathcal{E},\mathcal{N})$ be a bipartition of a balanced (without loss of generality connected) graph $G$, where the degree of every vertex in $\mathcal{E}$ is even.
	We consider two cases regarding the cardinality of $\mathcal{N}$. 
	If $|\mathcal{N}| \neq 3$, then, by Theorem~\ref{thm:parity}, 
	there exists a vertex-parity edge-coloring $\varphi$ of a proper parity pair $(G,\pi)$ with at most $4$ colors,
	where the proper vertex signature $\pi$ is obtained by assigning $0$ to all vertices from $\mathcal{E}$ and $1$
	to the vertices of $\mathcal{N}$. Observe that $\varphi$ is a locally irregular edge-coloring of $G$, 
	since for every color $c$, the graph $G_c$ induced by the edges of $c$ is locally irregular: for every edge $e$ its two 
	end-vertices have degrees of different parities in $G_c$.
	
	Consider now the case when $|\mathcal{N}| = 3$, with $\mathcal{N} = \set{v_1,v_2,v_3}$. 
	Clearly, as we consider simple graphs, there are only vertices of degree $2$ in $\mathcal{E}$.
	Such a graph is a fully subdivided graph of some multigraph and so, by Theorem~\ref{thm:sub}, the result follows.
\end{proof}

Using the bound of Corollary~\ref{cor:par}, we are able to prove Theorem~\ref{thm:mainbip} 
simply by following the proofs of Theorem~$3.9$ and Corollary~$3.10$ in~\cite{BenMerTho16}.
Similarly, we obtain the result in Theorem~\ref{thm:gen}. For the sake of completeness, 
we recall the proof of Theorem~$4.6$ in~\cite{BenMerTho16} and modify it with our result.
\begin{proof}[Proof of Theorem~\ref{thm:gen}.]
	By Theorem~$2.3$ from~\cite{BenMerTho16}, it suffices to show that $\ci(G) \le 219$ holds
	for an even sized connected graph $G$.
	By Lemma~$4.5$~from~\cite{BenMerTho16}, $G$ can be decomposed into two graphs $D$ and $H$
	such that $D$ is $(2 \cdot 10^{10}+2)$-degenerate, every connected component of $D$ is of even size,
	and the minimum degree of $H$ is at least $10^{10}$. By the result of Przyby{\l}o~\cite{Prz15}, 
	$\ci(H) \le 3$ and by Theorem~$4.3$ from~\cite{BenMerTho16} and Theorem~\ref{thm:mainbip} it holds
	$$
		\ci(D) \le 6 (\lceil \log_2 (2\cdot 10^{10} + 3) \rceil + 1) = 216.
	$$
	Hence, $\ci(G) \le \ci(H) + \ci(D) \le 3 + 216 = 219$.
\end{proof}

\section{Subcubic graphs}
	\label{sec:cubic}

In this section we consider graphs with maximum degree $3$ 
and prove a stronger version of Theorem~\ref{thm:main}. First, we introduce some additional notation and auxiliary results.

Let $K_{1,3}''$ denote the complete bipartite graph $K_{1,3}$ with two edges subdivided once. 
An edge-decomposition of a connected graph is called \textit{pertinent} if it is comprised of paths of length $2$ ($2$-paths) 
and at most one element isomorphic either to $K_{1,3}$ or $K_{1,3}''$. If a graph is not connected, then its edge-decomposition 
is \textit{pertinent} if the restriction to every component of the graph is pertinent.
We will also use a structural result presented in~\cite{BenMerTho16} (Lemma~$2.2$ and Theorem~$2.3$).
\begin{theorem}[Bensmail et al., 2016]
	\label{thm:decompose}
	Every decomposable graph admits a pertinent edge-decomposition. 
\end{theorem}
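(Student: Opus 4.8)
A pertinent edge-decomposition is defined componentwise, and every component of a decomposable graph is again decomposable, so it suffices to treat a connected decomposable graph $G$; I would split according to the parity of $|E(G)|$. In either case the only real difficulty is the same: whenever one deletes a prescribed small subgraph from $G$, each component that breaks off must still have an \emph{even} number of edges, and controlling this is where the work goes.

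\emph{Even case.} The plan is to prove the stronger self-contained statement, needing no decomposability, that every connected graph with an even number of edges has a $2$-path decomposition, by induction on the number of edges. For the inductive step I would find a single $2$-path $Q$ such that every component of $G-E(Q)$ has even size and then apply induction. To locate $Q$: root a spanning tree $T$ of $G$ and let $v$ be a deepest vertex, so $v$ is a leaf of $T$ and all of its edges go to ancestors. If $\deg_G(v)\ge 2$, let $Q$ consist of the tree edge from $v$ to its parent $p$ and one more edge at $v$; deleting $Q$ either isolates $v$ while leaving $G-v$ connected (if $\deg_G(v)=2$) or leaves $G-E(Q)$ connected (if $\deg_G(v)\ge 3$), and in both cases exactly two edges are lost. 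The delicate case is $\deg_G(v)=1$: if the neighbour $p$ has another leaf-neighbour, delete the $2$-path through it; otherwise delete $vpw$ for a non-leaf neighbour $w$ of $p$ such that $pw$ is either not a bridge of $G-v$, or a bridge splitting it into two even halves. A short parity argument shows such a $w$ exists --- if not, all edges $pw$ would be bridges of $G-v$ with odd $p$-side, and combining these congruences with the (odd) size of $G-v$ is contradictory. This settles the even case, and with it the theorem for even-sized connected graphs, whose pertinent decompositions are exactly their $2$-path decompositions.

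\emph{Odd case.} Now $|E(G)|$ is odd and $G$ is decomposable, hence $G$ is neither an odd path nor an odd cycle, and does not lie in the exceptional family of non-decomposable graphs classified in~\cite{BauBenPrzWoz15}; in particular $G$ is not a path or cycle, so it has a vertex of degree at least $3$ and therefore contains a claw. The plan is to exhibit one subgraph $F\cong K_{1,3}$ or $F\cong K_{1,3}''$ of $G$ with every component of $G-E(F)$ of even size; since $|E(G-E(F))|$ is $|E(G)|-3$ or $|E(G)|-5$, hence even, the even case applied to each component then completes a pertinent decomposition. To choose $F$ I would once more use a rooted spanning tree: if a deepest vertex $v$ has $\deg_G(v)\ge 3$, take $F$ to be the claw at $v$ formed by the edge to its parent and two more (necessarily backward) edges, and argue exactly as above that deletion keeps component sizes even. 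If all deepest vertices have degree $\le 2$, descend to a deepest vertex $c$ of degree $\ge 3$; the part of $G$ below $c$ is then a union of paths hanging from $c$ (each possibly carrying one extra edge at its bottom, running to $c$ or above), and a case analysis on the parities of these pendant paths shows that either a claw at $c$ still works, or --- exactly when these parities would force every claw at $c$ to leave stray isolated edges behind --- a copy of $K_{1,3}''$ based at $c$ does. The main obstacle is precisely this: proving that at least one of the two options is always available, and that the residual configurations admitting neither coincide with those already ruled out by decomposability. This parity bookkeeping under deletion, now entangled with the $K_{1,3}$-versus-$K_{1,3}''$ choice, is the technical heart of the argument.
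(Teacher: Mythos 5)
This theorem is not proved in the paper at all: it is quoted from Bensmail, Merker and Thomassen (their Lemma~2.2 and Theorem~2.3), so there is no in-paper argument to compare against. Judged on its own, your plan has one solid half and one essentially missing half. The even case is fine: the statement that every connected graph of even size decomposes into $2$-paths is the classical result underlying the BMT proof as well, and your DFS-tree argument works, including the bridge/parity step (if every edge $pw$ at $p$ were a bridge of $G-v$ with odd sides, the components $B_w$ of $(G-v)-p$ would be pairwise disjoint, each of odd size, and $|E(G-v)| = \deg_{G-v}(p) + \sum_w |E(B_w)|$ would be even, contradicting that it is odd). One small fix: you need a DFS tree, not an arbitrary rooted spanning tree, for ``all edges of a deepest vertex go to ancestors'' to hold.

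The genuine gap is the odd case, which is the actual content of the theorem and the only place the hypothesis of decomposability is used. You reduce it to: find a copy $F$ of $K_{1,3}$ or $K_{1,3}''$ such that every component of $G-E(F)$ has even size; this reduction is correct, but the existence of such an $F$ is exactly what must be proved, and your proposal explicitly defers it (``the technical heart of the argument'') rather than proving it. Two concrete problems. First, the obstruction to a given $F$ is global, not local: a claw at your chosen vertex $c$ can fail because a component far from $c$ comes out odd, so a case analysis on ``the parities of the pendant paths below $c$'' cannot by itself certify success, and restricting the search for $F$ to a single deepest branch vertex is likely too rigid --- one may need to place $F$ elsewhere or relocate it during the induction. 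Second, the exceptional (non-decomposable) family of Baudon et al.\ contains much more than odd paths and odd cycles: it contains an infinite family of triangle-based graphs (e.g., a triangle with an even-length path attached at one vertex has odd size, has a vertex of degree $3$, yet admits no suitable $F$ and no pertinent decomposition). So ``$G$ is not a path or cycle, hence has a claw'' is far from sufficient; the proof must show that whenever \emph{every} candidate $F$ in the whole graph fails, $G$ lies in this triangle-built exceptional family, and that is a nontrivial structural induction you have only named, not carried out.
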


In order to prove Theorem~\ref{thm:main}, we give a proof of a somewhat stronger statement.
We say that an edge-decomposition $\mathcal{D}$ of a graph is \textit{strongly pertinent} if it is pertinent and in the case 
$\mathcal{D}$ contains an element isomorphic to $K_{1,3}''$ in some component $C$, the graph has no pertinent edge-decomposition without $K_{1,3}''$ in $C$.
Two elements of an edge-decomposition are \textit{incident} if they have a common vertex.
\begin{theorem}
	\label{thm:mainext}
	Let $G$ be a decomposable subcubic graph and let $\mathcal{D}$ be a strongly pertinent edge-decomposition of $G$. 
	Then, $G$ admits a locally irregular edge-coloring with at most $4$ colors such that
	\begin{itemize}
		\item[$(i)$] the edges of every element of $\mathcal{D}$ are colored with the same color; and
		\item[$(ii)$] if the edges of two incident elements $p_1$, $p_2$ of $\mathcal{D}$ are colored with the same color, 
			then the vertex, at which $p_1$ and $p_2$ are incident, is the central vertex of either $p_1$ or $p_2$.
	\end{itemize}
\end{theorem}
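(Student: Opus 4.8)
The plan is to argue by induction on $|E(G)|$, building the coloring up one element of $\mathcal{D}$ at a time. First I would set up the base cases: a single $2$-path, a single $K_{1,3}$, or a single $K_{1,3}''$ can each be colored with one color (they are locally irregular on their own), and conditions $(i)$ and $(ii)$ are trivially satisfied. For the inductive step I would order the elements of $\mathcal{D}$ so that, restricted to each component, the ``decomposition graph'' (with a node per element of $\mathcal{D}$ and an edge between incident elements) is connected, and peel off a leaf-like element $p$ — i.e. an element incident to only a few others — then apply induction to $G' = G - E(p)$ after checking that the induced decomposition of $G'$ is still strongly pertinent (or handling the degenerate cases where removing $p$ disconnects a component or forces a different pertinent decomposition). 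The crux of the argument is then the \emph{local} recoloring lemma: when we reintroduce $p$, we must choose a single color for all of $p$ avoiding conflicts, and a conflict at a vertex $v$ arises only when $p$ meets another monochromatic element $q$ of the same color at $v$ with $v$ \emph{not} central in either $p$ or $q$ — this is precisely the configuration forbidden by $(ii)$, and it is also exactly the configuration that can destroy local irregularity of a color class. Since $G$ is subcubic, $p$ has at most two ``endpoint'' vertices where such a clash can occur (a $2$-path has two endpoints; the $K_{1,3}$ and $K_{1,3}''$ elements have three leaves but can occur at most once per component), and at each such vertex at most two other elements are present because $\Delta(G)\le 3$; with $4$ colors available there is always a color for $p$ that is forbidden at neither endpoint.

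Concretely, I would phrase the inductive hypothesis to carry a bit more than just $(i)$ and $(ii)$: namely, for each component $C$ the coloring is locally irregular \emph{and} at every vertex $v$ of degree $2$ or $3$ in $C$ the set of colors appearing, together with which incident elements are central at $v$, is recorded. Then reattaching $p$ splits into a short case analysis on the type of $p$ (a $2$-path, versus the unique $K_{1,3}$ or $K_{1,3}''$), on whether $p$ attaches at one or two components of $G'$, and on how many elements already sit at each attachment vertex of $p$. In every case one counts the ``bad'' colors at the at most two non-central attachment points of $p$: each such point lies on at most two other elements, giving at most $2+2=4$ forbidden colors in the worst case, but a more careful count (using that at a degree-$3$ vertex at most one element can be a $2$-path with $v$ non-central, and that the central vertex of a $2$-path already ``uses up'' a slot) shows at most $3$ colors are truly forbidden, leaving a valid choice among the $4$.

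The main obstacle I expect is the bookkeeping around the $K_{1,3}''$ element and the ``strongly pertinent'' hypothesis: the definition of strong pertinence is there precisely so that we never have to deal with a $K_{1,3}''$ that could have been avoided, which is what lets us treat it as a rigid block whose three leaves behave like endpoints of $2$-paths. One must verify that deleting a well-chosen element $p$ from a strongly pertinent decomposition again yields a strongly pertinent decomposition of $G'$ — or, when it does not (because removing $p$ creates a component whose only pertinent decomposition needs a $K_{1,3}''$, or conversely frees up a $K_{1,3}''$-free decomposition), handle that component directly by the base-case colorings and then merge. This interface between the global decomposition-changing moves and the local $4$-coloring step, rather than the coloring count itself, is where the care is needed; the $\Delta \le 3$ hypothesis keeps every local configuration small enough that the palette of $4$ colors never runs out.
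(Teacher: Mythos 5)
Your overall frame (minimal counterexample / induction on $|E(G)|$, remove one element $p$ of $\mathcal{D}$, color the rest, then find a color for $p$) matches the opening of the paper's proof, and your observation that an element with few conflicts can be colored greedily is exactly the paper's Claim~2. But the heart of your plan --- the counting step claiming that at most $3$ of the $4$ colors are ever ``truly forbidden'' for the reinserted element --- has a genuine gap, and it is precisely the gap the paper spends most of its effort closing. A $2$-path $p=uvw$ in a subcubic graph can have $4$ or even $5$ conflicts (two other elements at each pendant vertex of degree $3$, plus one at the central vertex), and nothing prevents those conflicting elements from realizing all $4$ colors; your subsidiary claim that ``at a degree-$3$ vertex at most one element can be a $2$-path with $v$ non-central'' is false (a degree-$3$ vertex can be a pendant vertex of three distinct $2$-paths simultaneously). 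So there are configurations with no free color, and a purely greedy choice cannot finish.

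The paper's proof handles exactly these configurations with two tools absent from your proposal. First, Kempe-chain-style swaps: it defines the two-colored components $G_{\varphi}^{\{a,b\}}(v)$ and repeatedly argues that either such a component can be swapped to liberate a color for $p$, or else certain $(a,b)$-paths connect specific vertices, which forces enough structure to permit a targeted recoloring of neighboring elements. Second, after all local configurations are excluded (Claims~4--6 show every $2$-path is of ``type~(c)'', i.e.\ all its pendant vertices are pendant for every incident element), the residual graph is essentially a cubic graph with every edge subdivided, possibly plus one $K_{1,3}$ or $K_{1,3}''$; the paper finishes by applying Vizing's theorem to a contracted auxiliary graph $G^*$ and then doing one more round of swaps to absorb the leftover edge $e_1$ of the claw. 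Your plan would need to be supplemented with both of these ideas (or substitutes for them) before it could be completed; as written, the inductive step fails on the first $2$-path all four of whose conflicting neighbors carry distinct colors.
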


\begin{proof}
	We proceed by contradiction. Let $G$ be a subcubic graph with the smallest number of edges such that it does
	not admit a locally irregular edge-coloring with at most $4$ colors satisfying the properties $(i)$ and $(ii)$. 
	Clearly, $G$ is connected.
	Let $\mathcal{D}$ be a strongly pertinent edge-decomposition of $G$.
	Given an element $q \in \mathcal{D}$, we say that it has $k$ \textit{conflicts} if it is incident to $k$ distinct elements of $\mathcal{D}$.
	Moreover, a vertex of degree $1$ in an element of $\mathcal{D}$ is called \textit{pendant}, the other vertices are called \textit{central} (see Fig.~\ref{fig:decom}).
	Note that $K_{1,3}''$ has three central vertices.
	\begin{figure}[htp!]
		$$
			\includegraphics{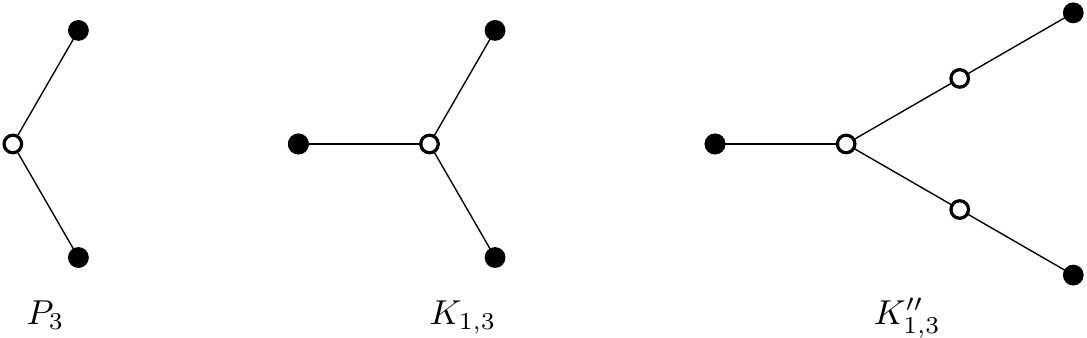}
		$$
		\caption{A $2$-path and the graphs $K_{1,3}$ and $K_{1,3}''$. The central vertices are depicted as empty circles; the pendant are depicted as full circles.}
		\label{fig:decom}
	\end{figure}	
	
	By the definition of strongly pertinent decompositions, we immediately infer the following.
	
	\medskip
	\noindent\textbf{Claim $1$.} \textit{No pendant vertex of $K_{1,3}''$ is the central vertex of any $2$-path in $\mathcal{D}$.}
	
	\smallskip
	\begin{proofclaim}
		Indeed, suppose there is a $2$-path $p$ in $\mathcal{D}$ incident at the central vertex to an element $q$ isomorphic to $K_{1,3}''$. Then, 
		there exists a pertinent decomposition with all the elements equal to the elements of $\mathcal{D}$
		except for $p$ and $q$. These two can be decomposed into a $K_{1,3}$ comprised of $p$ and the edge of $q$ incident to $p$, 
		and two $2$-paths comprised of the remaining four edges of $q$.
	\end{proofclaim}
	
	\medskip
	Clearly, if $G$ has at most four edges, then it admits a locally irregular edge-coloring with at most two colors satisfying the properties $(i)$ and $(ii)$.
	Hence, we may assume that $G$ has at least five edges.
	For simplicity, we abuse the notation by denoting the colors of the edges of an element $q$ from $\mathcal{D}$ by $\varphi(q)$.
	We say that a color $c$ is \textit{free} for an element if no incident element is colored with $c$.
	
	\medskip
	\noindent\textbf{Claim $2$.} \textit{Every element of $\mathcal{D}$ has at least $4$ conflicts.}
	
	\smallskip
	\begin{proofclaim}
		Suppose that there is an element $p \in \mathcal{D}$ with at most three conflicts. 
		Then, $G - p$ admits a locally irregular $4$-coloring $\varphi$, 
		and we complete the coloring of $G$ by coloring the edges of $p$ with the color $p$ is not incident with. 
	\end{proofclaim}
		
	\medskip
	Claim $2$ implies a simple observation regarding the number of central vertices incident to elements of $\mathcal{D}$.
	
	\medskip
	\noindent\textbf{Claim $3$.} \textit{At most one pendant vertex of a $2$-path is the central vertex of some element of $\mathcal{D}$, 
	and at most two pendant vertices of $K_{1,3}$ are the central vertices of $2$-paths in $\mathcal{D}$.}
	
	\medskip
	Hence, by Claim~$3$, there are three cases regarding the type of the neighborhood of a $2$-path $p = uvw$ from $\mathcal{D}$ depicted in Fig.~\ref{fig:edgetypes}.
	\begin{figure}[htp!]
		$$
			\includegraphics{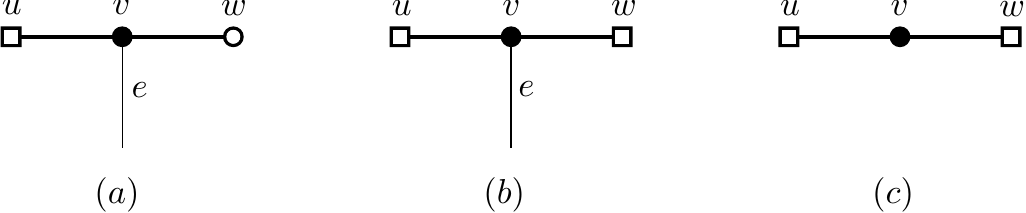}
		$$
		\caption{Three possible neighborhoods of a $2$-path in $\mathcal{D}$ with at least $4$ conflicts. 
			The empty square vertices denote pendant vertices for all incident elements of $\mathcal{D}$, 
			and the empty circle vertex denotes a central vertex for some element of $\mathcal{D}$.}
		\label{fig:edgetypes}
	\end{figure}	
	
	We first show that there are no edges of type $(a)$ in $G$.
	
	\medskip
	\noindent\textbf{Claim $4$.} \textit{No pendant vertex of any $2$-path from $\mathcal{D}$ is the central vertex of any element from $\mathcal{D}$.}
	
	\smallskip
	\begin{proofclaim}
		Let $p$ be a $2$-path from $\mathcal{D}$ with one pendant vertex being the central vertex of some $2$-path,
		and let the vertices of $p$ be denoted as in Fig.~\ref{fig:edgetypes}$(a)$.
		In this case, $p$ has exactly four conflicts, directly implying that $u$ and $w$ have degree $3$.
		By minimality, $G - p$ admits a locally irregular $4$-edge-coloring $\varphi$. 
		In the case when the elements in conflict with $p$ use at most three distinct colors in $\varphi$, 
		we complete the coloring using the fourth color on the edges of $p$. 
		So, we may assume that $p$ is incident with the edges of four distinct colors in $\varphi$. 
		Additionally, let $q$ and $r$ be the elements of $\mathcal{D}$ incident to $p$ at the vertex $w$ and $v$, respectively.
		Say that $\varphi(q) = 3$ and $\varphi(r) = 4$ (hence $\varphi(e) = 4$).

		Consider two subcases. In the former, suppose $r$ is a $2$-path. 
		Now, take a locally irregular edge-coloring of the graph $G - \set{p,r}$ with at most $4$ colors, 
		color $r$ with a free color (which is possible since $r$ has at most three conflicts in $G-p$), and finally
		color $p$ with either a free color or the same color as $r$. 
		Hence, in the latter case, two adjacent elements of $\mathcal{D}$ have the same color (see Fig.~\ref{fig:case_a}).
		\begin{figure}[htp!]
			$$
				\includegraphics{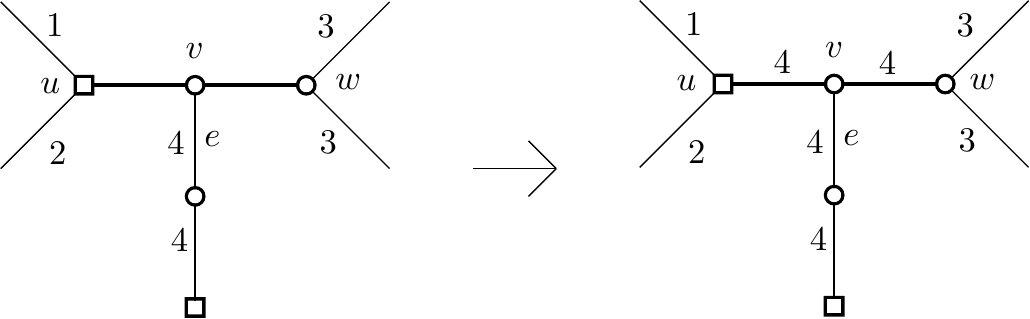}
			$$
			\caption{Coloring a path with the neighborhood depicted in Fig.~\ref{fig:edgetypes}$(a)$, when the path $r$ has three additional 
				conflicts, each colored with a distinct color different from $4$.}
			\label{fig:case_a}
		\end{figure}			

		In the second subcase, let $r$ be isomorphic to $K_{1,3}$ or $K_{1,3}''$. 
		Then, the element $q$ is a $2$-path, since it is colored differently as $r$ and it is hence not isomorphic to $K_{1,3}$ or $K_{1,3}''$.
		If $q$ is not incident to any edge of color $3$, then we color $p$ with $3$.
		Note that also in this case, two adjacent elements of $\mathcal{D}$ have the same color.

		Hence, we may assume that $q$ is incident with two edges (resp. four) of color $3$, both edges belonging to the same $2$-path (resp. all four edges belonging to two $2$-paths), 
		due to assumption $(ii)$.
		This means that $q$ is incident to at most three distinct colors, and hence we can recolor it with the color not appearing in its neighborhood.
		Finally, we color $p$ with $3$.
	\end{proofclaim}
	
	\medskip
	Thus, all the $2$-paths in $\mathcal{D}$ are of type $(b)$ or $(c)$. Moreover, the number of $2$-paths of type $(b)$ is limited.

	\medskip
	\noindent\textbf{Claim $5$.} \textit{There are at most two $2$-paths of type $(b)$ in $\mathcal{D}$. Moreover, any $2$-path of type $(b)$ is incident with a pendant vertex of $K_{1,3}$ 
		at the central vertex.}

	\smallskip
	\begin{proofclaim}
		By Claims~$1$ and~$4$, if the central vertex of a $2$-path from $\mathcal{D}$ is incident to some element $q$ from $\mathcal{D}$,
		then $q$ is isomorphic to $K_{1,3}$.
		Since $q$ has at least four conflicts by Claim~$2$, it follows that at most two of its pendant vertices are 
		the central vertices of some $2$-paths. This establishes the claim.
	\end{proofclaim}
	
	\medskip	
	We proceed by considering the edges of type $(b)$ in $G$. Before we state the claim and its proof, 
	we introduce some additional notation. By $G_{\varphi}^{\{a,b\}}(v)$, we denote the connected subgraph of $G$ 
	induced by the edges of colors $a$ and $b$ in the coloring $\varphi$ containing the vertex $v$. If $G_{\varphi}^{\{a,b\}}(u) = G_{\varphi}^{\{a,b\}}(v)$
	for some pair of vertices $u$ and $v$, we say that there is an \textit{$(a,b)$-path between $u$ and $v$}.
	Further, we say that we \textit{make a swap in $G_{\varphi}^{\{a,b\}}(v)$}, if we recolor all the edges of color $a$ in $G_{\varphi}^{\{a,b\}}(v)$ with $b$
	and vice versa. Clearly, if $\varphi$ is a locally irregular edge-coloring, then it remains such after an arbitrary number of swappings.
	
	\medskip
	\noindent\textbf{Claim $6$.} \textit{No pendant vertex of $K_{1,3}$ is the central vertex of any $2$-path from $\mathcal{D}$.}
	
	\smallskip
	\begin{proofclaim}
		Let $p$ be a $2$-path from $\mathcal{D}$ with the central vertex $v$ being a pendant vertex of $K_{1,3}$, which we call $r$.
		By Claim~$4$, $p$ is of type $(b)$.
		Label the vertices as in Fig.~\ref{fig:edgetypes}$(b)$, with $e$ being the edge of $K_{1,3}$.
		In this case, $p$ has either four or five conflicts. 
		In the former case, one of the vertices $u$ and $w$ is of degree $2$ and the other is of degree $3$; 
		in the latter case, both are of degree $3$.

		Let $x$ be the vertex of degree $3$ in $r$. 
		By minimality of $G$, $G - p$ admits a locally irregular $4$-edge-coloring $\varphi$, and assume $\varphi(r) = 4$. 
		As before, we may assume that $p$ is incident to all four colors, for otherwise we simply color the edges of $p$
		with the color $p$ is not incident with. 

		Now, we consider two cases regarding the number of conflicts of $p$.			
		Suppose first that $p$ has four conflicts and let $u$ be the vertex of degree $2$. 
		Let $q$ be the second $2$-path incident to $u$ and say $\varphi(q) = 1$. 
		By Claim~$2$, it must be of type $(b)$, and, by Claim~$5$, incident to $r$ at the central vertex.
		Hence, the third pendant vertex of $r$ is a pendant vertex of two $2$-paths, colored with $2$ and $3$, 
		otherwise we would recolor $r$ with a free color and color $p$ with $4$.		
		Let $s$ and $t$ be the two $2$-paths incident with $p$ at the vertex $w$.
		Without loss of generality, assume $\varphi(s) = 2$ and $\varphi(t) = 3$.
		By Claim~$5$, both, $s$ and $t$ are of type $(c)$, having three conflicts in $G - p$. 
		Furthermore, $s$ is incident to all three colors distinct from $2$, 
		and $t$ is incident to all three colors distinct from $3$.
		Now, if $G_{\varphi}^{\{2,4\}}(w) \neq G_{\varphi}^{\{2,4\}}(x)$, then we make a swap in $G_{\varphi}^{\{2,4\}}(w)$, 
		hence recoloring $s$ with $4$, and obtaining $2$ as a free color for $p$. 
		We proceed similarly in the case when $G_{\varphi}^{\{3,4\}}(w) \neq G_{\varphi}^{\{3,4\}}(x)$.
		So, we may assume $G_{\varphi}^{\{2,4\}}(w) = G_{\varphi}^{\{2,4\}}(x)$ 
		and $G_{\varphi}^{\{3,4\}}(w) = G_{\varphi}^{\{3,4\}}(x)$
		(see Fig.~\ref{fig:type_b}).
		Then, we recolor $s$ with $3$ and $t$ with $2$. 
		Observe that now $G_{\varphi}^{\{3,4\}}(w) \neq G_{\varphi}^{\{3,4\}}(x)$ (due to the fact that all $2$-paths except for $p$ and $q$ are of type $(c)$, 
		implying that $G_{\varphi}^{\{3,4\}}(w)$ is isomorphic to a path, possibly with a pendant $K_{1,3}$).
		So, we can make a swap on $G_{\varphi}^{\{3,4\}}(w)$ and finally color $p$ with $3$.		
		\begin{figure}[htp!]
			$$
				\includegraphics{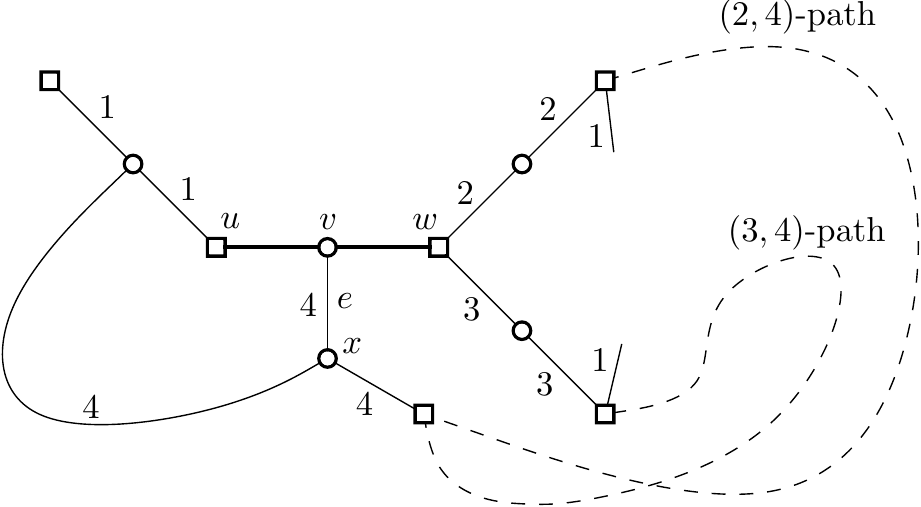}
			$$
			\caption{The case with $p$ having four conflicts and $G_{\varphi}^{\{2,4\}}(w) = G_{\varphi}^{\{2,4\}}(x)$,
				$G_{\varphi}^{\{3,4\}}(w) = G_{\varphi}^{\{3,4\}}(x)$.}
			\label{fig:type_b}
		\end{figure}

		So, we may assume that $p$ has five conflicts. 
		We again consider two subcases, now regarding the number of edges of color $4$ incident to $p$.
		Suppose first that $e$ is the only edge incident to $p$ colored with $4$.
		Let $q_1$, $q_2$ be the two $2$-paths incident to $p$ at $u$, and $s_1$, $s_2$ the two $2$-paths incident to $p$ at $w$.
		Without loss of generality, we may assume $\varphi(q_1) = \varphi(s_1) = 1$, $\varphi(q_2) = 2$, and $\varphi(s_2) = 3$.
		By Claim~$5$, at most one of these four $2$-paths is of type $(b)$, and in that case the central vertex of that path is 
		incident to $K_{1,3}$. So, we may assume that both paths $s_1$ and $s_2$ are of type $(c)$.
		The $2$-path $q_2$ (resp. $s_2$) is incident with the edges of colors $1$, $3$, and $4$ (resp. $1$, $2$, and $4$)
		otherwise we recolor it with a free color and color $p$ with $2$ (resp. $3$).				
		Next, similarly as above, if $G_{\varphi}^{\{2,3\}}(u) \neq G_{\varphi}^{\{2,3\}}(w)$, we make a swap on $G_{\varphi}^{\{2,3\}}(u)$.
		Hence, we may assume $G_{\varphi}^{\{2,3\}}(u) = G_{\varphi}^{\{2,3\}}(w)$. 
		If $s_2$ is the only element incident to $s_1$ colored with $3$, we recolor $s_1$ with $3$ and $s_2$ with $1$. 
		Then $G_{\varphi}^{\{2,3\}}(u) \neq G_{\varphi}^{\{2,3\}}(w)$, and we make a swap on $G_{\varphi}^{\{2,3\}}(w)$.
		If $s_1$ is not incident with an element of color $2$, we recolor it with $2$ and recolor $s_2$ with $1$. 
		Hence, $s_1$ is not incident with an element of color $4$. In this case, we recolor $s_1$ with $4$ and recolor $s_2$ with $1$. 
		In all the three cases listed above, the recoloring yields a free color for $p$ 
		with which we complete the coloring $\varphi$ on $G$.
		
		Therefore, we may assume that there is another edge, apart from $e$, incident to $p$ colored with $4$. 		
		Suppose first that one of the vertices $u$ and $w$, say $w$, is also incident with $r$. 
		Then $v$ is the only central vertex incident to $r$, 
		for otherwise $r$ would have at most tree conflicts in $G$.
		Clearly, there are three $2$-paths (of type $(c)$) incident to $p$, $q_1$ and $q_2$ at $u$, and $s_1$ at $w$, 
		colored, respectively, with the colors $1$, $2$, and $3$ in $\varphi$.
		As above, we may assume $q_1$, $q_2$, and $s_1$ are incident with the edges of all colors except for theirs.
		So, if $G_{\varphi}^{\{2,3\}}(u) \neq G_{\varphi}^{\{2,3\}}(w)$ or $G_{\varphi}^{\{1,3\}}(u) \neq G_{\varphi}^{\{1,3\}}(w)$,
		we make a corresponding swap obtaining a free color for $p$. 
		Hence, $G_{\varphi}^{\{2,3\}}(u) = G_{\varphi}^{\{2,3\}}(w)$ and $G_{\varphi}^{\{1,3\}}(u) = G_{\varphi}^{\{1,3\}}(w)$.
		Now, we recolor $q_1$ with $2$ and $q_2$ with $1$, and make a swap on $G_{\varphi}^{\{2,3\}}(w)$. This yields a free color $3$ for $p$.
		
		Thus, $r$ is not incident to $w$, and there are four $2$-paths incident to $p$ at $u$ and $w$ (as above denoted by $q_1$, $q_2$, $s_1$, and $s_2$), 
		colored with four distinct colors, say $\varphi(q_1) = 1$, $\varphi(q_2) = 2$, $\varphi(s_1) = 3$, and $\varphi(s_2) = 4$.
		Observe that all four $2$-paths are incident to the elements of all three colors distict from theirs, otherwise we can recolor them
		obtaining a free color for $p$, or establishing the previous case (if $s_2$ could be recolored).
		We use the same reasoning as above to establish that $G_{\varphi}^{\{2,3\}}(u) = G_{\varphi}^{\{2,3\}}(w)$ and $G_{\varphi}^{\{1,3\}}(u) = G_{\varphi}^{\{1,3\}}(w)$.
		Hence, we can recolor $q_1$ with $2$ and $q_2$ with $1$, and finally make a swap on $G_{\varphi}^{\{1,3\}}(u)$.
		This yields a free color $3$ for $p$ and establishes the claim.
	\end{proofclaim}
	
	\medskip
	From Claims~$1$, $4$, and $6$ it follows that all $2$-paths in $\mathcal{D}$ are of type $(c)$ and moreover, 
	by Claim~$2$, every pendant vertex of a $2$-path is of degree $3$. 
	
	\medskip
	\noindent\textbf{Claim $7$.} \textit{The graph $G$ has an odd number of edges.}
	
	\smallskip
	\begin{proofclaim}
		Suppose that $G$ has an even number of edges. 
		Then it is isomorphic to a cubic graph with every edge subdivided once. 
		By Vizing's theorem, every cubic graph admits a proper edge-coloring with at most $4$ colors, 
		which consequently induces a locally irregular $4$-edge-coloring of $G$.
	\end{proofclaim}
	
	\medskip
	Hence, $G$ has an odd number of edges, every $2$-path in $\mathcal{D}$ is of type $(c)$, and there
	is an element $r$ in $\mathcal{D}$ isomorphic to either $K_{1,3}$ or $K_{1,3}''$.
	Let $x$ be the vertex of degree $3$ of $r$, and let $v_1$, $v_2$, and $v_3$ be the pendant vertices of $r$.
	In the case when $r$ is isomorphic to $K_{1,3}''$, we may assume, without loss of generality, that $v_1$ is adjacent to $x$ and set $e_1 = v_1 x$.
	
	Let $G^*$ be the graph obtained from $G - e_1$ by contracting all vertices of degree $2$ except $v_1$, 
	i.e. we remove every vertex of degree $2$ and connect its two neighbors by an edge.
	Notice that $v_2$ and $v_3$ are adjacent in $G^*$ even in the case when $r$ is isomorphic to $K_{1,3}''$.
	By Vizing's theorem, $G^*$ admits an edge-coloring $\varphi^*$ with at most $4$ colors.
	The coloring $\varphi^*$ induces a locally irregular edge coloring $\varphi$ of $G - x$, if we assign 
	the color of an edge $e = uv$ in $G^*$ to the two edges in $G$ corresponding to $e$ before contracting the vertex
	of degree $2$ neighboring both, $u$ and $v$. 	
	The edges $xv_2$ and $xv_3$ receive the same color, say $1$, in $\varphi$, since $v_2v_3$ are adjacent in $G^*$,
	while $e_1$ remains non-colored. 
		
	In what follows, we show that we can modify $\varphi$ such that $e_1$ receives the same color as the remaining edges of $r$ 
	and hence $\varphi$ becomes a locally irregular edge-coloring of $G$.			
	We may assume that $v_1$ is incident to an edge of color $1$ (otherwise we simply color $e_1$ with $1$ and we are done). 
	Without loss of generality, we may further assume that $v_1$ is also incident with an edge of color $2$ (see Fig.~\ref{fig:case_c}).
	\begin{figure}[htp!]
		$$
			\includegraphics{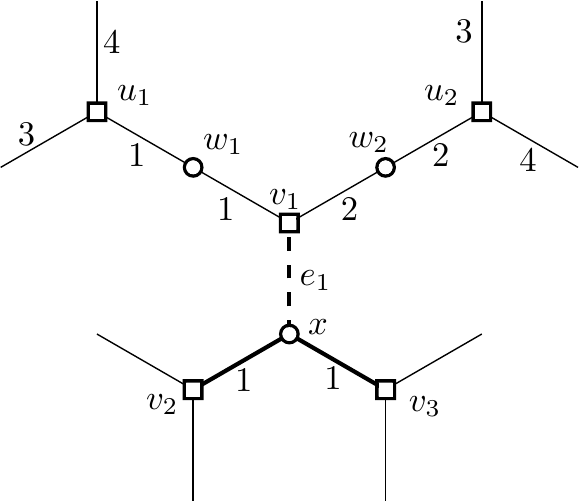}
		$$
		\caption{The initial situation after coloring the graph $G^*$ with $4$ colors inducing the coloring $\varphi$ in $G$.}
		\label{fig:case_c}
	\end{figure}		
	
	Denote the vertices of the two $2$-paths incident to $v_1$ as depicted in Fig.~\ref{fig:case_c}.
	We refer to the path $u_1w_1v_1$ as $q_1$, and to $u_2w_2v_1$ as $q_2$.
	We may assume that $u_1$ is incident with edges of colors $3$ and $4$, otherwise we recolor $q_1$ with $3$ or $4$, respectively. 
	Similarly, the vertex $u_2$ is incident with edges of colors $3$ and $4$, 
	otherwise we recolor $q_1$ with $2$ and $q_2$ with $3$ or $4$, which is not incident to $u_2$.
			
	Similarly as in the proof of Claim~$6$, we may assume that $G_{\varphi}^{\{1,3\}}(v_1) = G_{\varphi}^{\{1,3\}}(x)$ and
	$G_{\varphi}^{\{1,4\}}(v_1) = G_{\varphi}^{\{1,4\}}(x)$, for otherwise we make a swap on $G_{\varphi}^{\{1,3\}}(v_1)$ or $G_{\varphi}^{\{1,4\}}(v_1)$,
	and hence making $1$ a free color for $e_1$.
	Furthermore, since we can exchange the colors of $q_1$ and $q_2$, we also have
	$G_{\varphi}^{\{1,3\}}(u_2) = G_{\varphi}^{\{1,3\}}(x)$ and $G_{\varphi}^{\{1,4\}}(u_2) = G_{\varphi}^{\{1,4\}}(x)$ (see Fig.~\ref{fig:case_c2}).
	\begin{figure}[htp!]
		$$
			\includegraphics{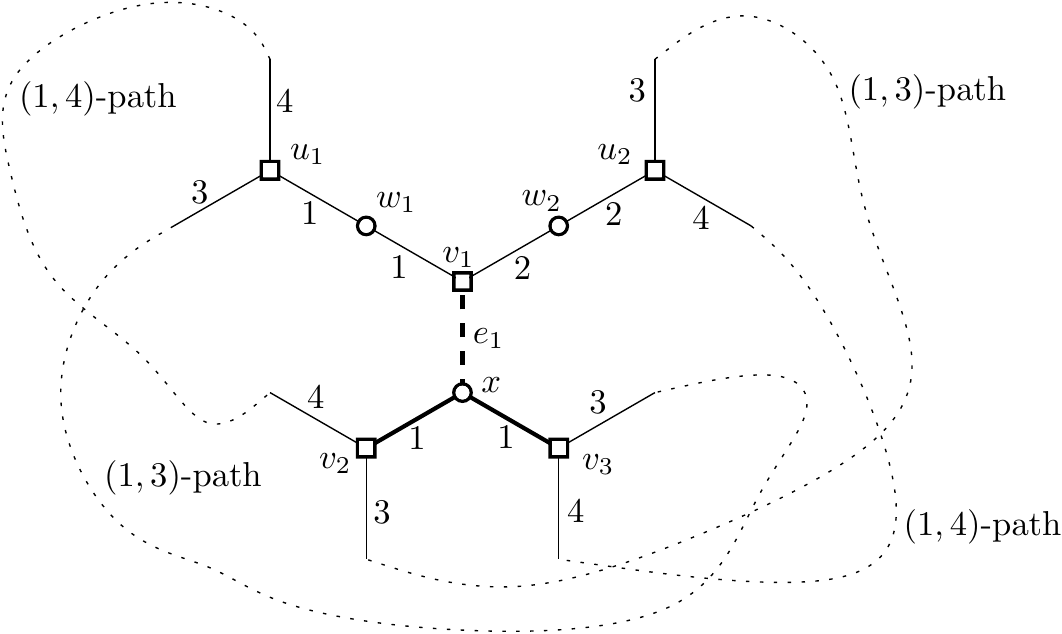}
		$$
		\caption{There are $(1,3)$-paths and $(1,4)$-paths between the vertex $x$ and the vertices $v_1$ and $u_2$.}
		\label{fig:case_c2}
	\end{figure}		
			
	Finally, we have $G_{\varphi}^{\{2,3\}}(v_1) = G_{\varphi}^{\{2,3\}}(u_1)$, 
	otherwise we make a swap on $G_{\varphi}^{\{2,3\}}(v_1)$, recolor $q_1$ with $2$ and color $e_1$ with $1$.
	Analogously, we have $G_{\varphi}^{\{2,4\}}(v_1) = G_{\varphi}^{\{2,4\}}(u_1)$.
	This means that the two $2$-paths incident to $u_1$ distinct from $q_1$ are incident only to edges of colors $1$ and $2$, 
	and hence we can exchange their colors. 
	Consequently, after such exchange $G_{\varphi}^{\{2,3\}}(v_1) \neq G_{\varphi}^{\{2,3\}}(u_1)$,
	so we can make a swap on $G_{\varphi}^{\{2,3\}}(v_1)$, recolor $q_1$ with $2$ and color $e_1$ with $1$.
	Thus, we obtained a locally irregular edge-coloring of $G$ with at most $4$ colors satisfying the properties $(i)$ and $(ii)$, 
	a contradiction which establishes the theorem.
\end{proof}

The proof of Theorem~\ref{thm:main} trivially follows from Theorems~\ref{thm:decompose} and~\ref{thm:mainext}.
Unfortunately, we were not able to confirm Conjecture~\ref{con:main} for subcubic graphs. Our approach, using decompositions into $2$-paths and (generalized) claws
such that each element of the decomposition is colored with one color does not work for three colors in general. 
In Fig.~\ref{fig:cube}, an example of a graph with a given strongly pertinent decomposition is presented. 
On the other hand, choosing another strongly pertinent decomposition allows such a coloring of the given graph (see the right graph in Fig.~\ref{fig:cube}).
Therefore, we believe that after some modification of our technique, one can be able to prove Conjecture~\ref{con:main} for subcubic graphs.
\begin{figure}[htp!]
	$$
		\includegraphics{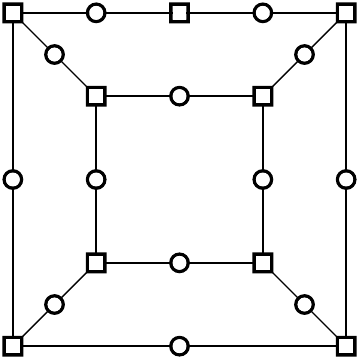} \quad\quad\quad\quad\quad
		\includegraphics{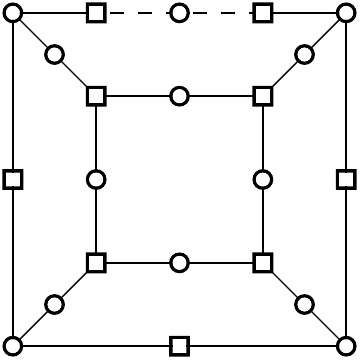}
	$$
	\caption{The left graph of $3$-dimensional cube with each edge subdivided once except for one which is subdivided three times, 
		with the given strongly pertinent decomposition does not admit a locally irregular edge-coloring
		with at most three colors such that every element of the decomposition receives only one color. The same graph on the right side 
		with another strongly pertinent decomposition admits such a coloring with two colors; all the elements are colored with the same color except the path depicted dashed.}
	\label{fig:cube}
\end{figure}

Finally, using the computer, we verified that every cubic graph on at most $20$ vertices, and every decomposable subcubic graph on at most $15$ vertices with minimum degree $2$ 
admits a locally irregular edge-coloring with at most $3$ colors.

\paragraph{Acknowledgment.} 

The first author was partly supported by the Slovenian Research Agency Program P1--0383 and by the National Scholarship Programme of the Slovak Republic.
The second author was supported by the National Science Centre, Poland, Grant No. 2014/13/B/ST1/01855 and partly supported by the Polish
Ministry of Science and Higher Education. The third author was supported by the Slovak Research and Development Agency under 
the Contract No. APVV--15--0116 and by the Slovak VEGA Grant 1/0368/16.

\bibliographystyle{plain}

\bibliography{mainBib}

\end{document}